\newcommand{\R}{\mathbb{R}}
\newcommand{\C}{\mathbb{C}}
\newcommand{\D}{\mathbb{D}}
\newcommand{\N}{\mathbb{N}}
\newcommand{\Q}{\mathbb{Q}}
\newcommand{\A}{\mathbb{A}}
\renewcommand{\P}{\mathbb{P}}
\newcommand{\nA}{\mathbb{A}}
\newcommand{\nP}{\mathbb{P}}
\newcommand{\cG}{\mathcal{G}}
\newcommand{\norm}[1]{\left\|#1\right\|}
\newcommand{\mc}{\mathcal}
\newcommand{\ol}{\overline}
\newcommand{\wt}{\widetilde}
\newcommand{\eps}{\varepsilon}
\newcommand{\loc}{\mathrm{loc}}
\newcommand{\abs}[1]{{\left|{#1}\right|}}
\newcommand{\on}[1]{\operatorname{#1}}
\renewcommand{\Re}{\on{Re}}
\newcommand{\set}[1]{{\left\{#1\right\}}}
\newcommand{\nk}{\Bbbk}
\DeclareMathOperator{\Gal}{Gal}	
\DeclareMathOperator{\an}{an}	
\DeclareMathOperator{\alg}{alg}	
\DeclareMathOperator{\spec}{Spec}
\newcommand{\fr}{\partial}
\newcommand{\rest}[1]{ \arrowvert_{#1}}
\newcommand{\unsur}[1]{\frac{1}{#1}}
\newcommand{\lrpar}[1]{\left(#1\right)}
\newcommand{\la}{\lambda}
\DeclareMathOperator{\essmin}{\mathrm{essmin}}
\newcommand{\inv}{^{-1}}
\newtheorem{introthm}{Theorem}
\newtheorem{introcor}[introthm]{Corollary}
\newtheorem{thm}{Theorem}[section]
\newtheorem{lem}[thm]{Lemma}
\newtheorem{cor}[thm]{Corollary}
\newtheorem{prop}[thm]{Proposition}
\theoremstyle{definition}
\newtheorem{rmk}[thm]{Remark}
\newtheorem{rmk-intro}{Remark}
\theoremstyle{definition}
\title[DMM conjecture for plane  polynomial maps]{On the dynamical Manin-Mumford conjecture for 
plane  polynomial maps}
\author{Romain Dujardin}
\address{Sorbonne Universit\'e, Laboratoire de probabilit\'es, statistique et mod\'elisation, UMR 8001,  4 place Jussieu, 75005 Paris, France}
\email{\href{romain.dujardin@sorbonne-universite.fr}{romain.dujardin@sorbonne-universite.fr}}
\author{Charles Favre}
\address{CNRS - Centre de Math\'ematiques Laurent Schwartz, 
	\'Ecole Polytechnique, 
	91128 Palaiseau Cedex, France}
\email{\href{charles.favre@polytechnique.edu}{charles.favre@polytechnique.edu}}
\author{Matteo Ruggiero}
\address{Université de Paris and Sorbonne Université, CNRS, Institut de Mathématiques de Jussieu-Paris Rive Gauche, F-75006 Paris, France}
\email{\href{mailto:matteo.ruggiero@imj-prg.fr}{matteo.ruggiero@imj-prg.fr}}
\date{\today}
\thanks{The research activities of the authors are  partially supported by the ANR grants Fatou (ANR-17-CE40-0002-01) and   DynAtrois (ANR-24-CE40-1163)}
\begin{document}

\begin{abstract}
We prove the dynamical Manin-Mumford conjecture 
for regular polynomial maps of $\A^2$ and irreducible curves
avoiding super-attracting orbits at infinity, over any field of characteristic $0$. 
\end{abstract} 

 \maketitle

\setcounter{tocdepth}{1}
\tableofcontents

\section*{Introduction}

The dynamical Manin-Mumford conjecture for polarized endomorphisms of algebraic varieties, first 
formulated by S.-W. Zhang in two influential papers~\cite[Conjecture 2.5]{zhang:smallpointsadelicmetrics}
and~\cite[Conjecture 1.2.1]{zhang:distributions}, has been a driving force for the development of 
the field of arithmetic dynamics. It was realized by Ghioca, Tucker and Zhang~\cite{GTZ11} and 
Pazuki~\cite{pazuki} that the original formulation of the conjecture was too optimistic, 
and a modified conjecture was proposed in~\cite{GTZ11} and more recently in~\cite{GT21}. It can be 
 stated as follows: \emph{let 
$f:X\to X$ be a polarized endomorphism of a smooth projective variety over a field of characteristic zero, and $Z\subset X$ be a subvariety containing a Zariski dense set of preperiodic points. 
Then either $Z$ is preperiodic or $Z$ is special}, in the sense that it is contained in some 
subvariety $Y$ that is both $f^n$- and $\psi$-invariant, for some $n\geq 1$, 
where $\psi$ is another polarized endomorphism 
commuting with $f^n$, and $Z$ is preperiodic under $\psi$. Recall that an endomorphism is said to be polarized if there is an ample 
line bundle $L\to X$ such that $f^*L \simeq L^d$ for some $d\geq 2$. A basic  example is that 
of non-invertible endomorphisms of $\P^k$, for which we can take $L  = \mathcal O(1)$ and $d$ is the degree of $f$. 

Despite its importance, very few cases of the conjecture have been settled so far. 
One first case is of course the original Manin-Mumford conjecture, 
which was solved by Raynaud~\cite{raynaud:ManinMumfordcourbes, raynaud:ManinMumfordsousvar}.
Viewed as a dynamical statement, it 
 deals with endomorphisms of Abelian varieties,  
 and was generalized to related  settings, such as commutative algebraic groups (see Hindry~\cite{hindry:1988}, and also Lang's classical paper~\cite{lang:division}). 
 Uniform versions involving height bounds were subsequently obtained by  
 S.-W. Zhang, David and Philippon, Chambert-Loir and others. We refer to the recent work of Kühne~\cite{MR4404794} in the semi-abelian case for a latest update, 
and more references.  
Closer to algebraic dynamics is the case 
 of polarized endomorphisms of $(\P^1)^k$, which was solved by Ghioca, Nguyen and Ye~\cite{GNY1, GNY2} (see also~\cite{GT21}). Note that such mappings are of product type, 
that is  of the form $f(x_1, \ldots , x_k) = (f_1(x_1), \ldots , f_k(x_k))$, 
so their dynamical complexity reduces to dimension 1, which is a key step in the proof.  

{Let us also note that 
the counterexamples of~\cite{GTZ11} to the original formulation of the conjecture (i.e., situations where the subvariety $Z$ is not preperiodic), occurring on products of elliptic curves, yield counterexamples of product type  in $(\P^1)^2$ via the Lattès construction.
By taking the Segre embedding to $\P^3$ and homogeneous extension of these examples (see \cite{fakhruddin:questselfmapalgvar,bhatnagar-szpiro:vamppolarextprojspace}), we get counterexamples that are endomorphisms of $\nP^3$.
However these product counterexamples are never regular on the birationally equivalent model $\nP^2$, where no regular counterexamples are known to us.
}

In this paper we will establish the dynamical 
Manin-Mumford conjecture for a wide class of 2-dimensional examples, 
whose dynamical behavior is  truly higher dimensional. 

Note that besides polarized endomorphisms, a partial answer to the conjecture 
was given in~\cite{DMM-henon} for plane polynomial automorphisms. 

\medskip

Let $\nk$ be any field of characteristic zero. 
If required we fix an algebraic closure $\nk^{\rm alg}$ of $\nk$. 
Let $f\colon\nA_\nk^2 \to \nA_\nk^2$ be a polynomial self-map 
of the affine plane of degree $d \geq 2$, 
which in coordinates is written as
\[
f(z,w)=\big(P(z,w),Q(z,w)\big)\text,
\]
with $P,Q \in \nk[x,y]$.
We say that $f$ is \emph{regular} if 
 it extends to an endomorphism of $\nP_\nk^2$ of degree $d\ge2$; this means that 
 $P=P_d + \text{l.o.t.}$ and $Q=Q_d + \text{l.o.t.}$, where $P_d$ and $Q_d$ are homogeneous polynomials of degree $d$ without common factors. 
 In particular $f$ induces a rational map $f_\infty := [P_d: Q_d]$ on the line at infinity, which is fixed. Note that 
 any endomorphism of $\nP_\nk^2$ with a totally invariant line is conjugate to a 
 regular polynomial map, and that a generic polynomial map of $\A^2$ whose components are polynomials of degree $\le d$ is regular. 
 
For regular polynomial maps of $\nA^2$, it seems that none of the known obstructions to the dynamical Manin-Mumford conjecture can arise. As said above, according to~\cite{GT21}, 
one obstruction would be the existence of a preperiodic  curve $C$ for some endomorphism $\psi$ commuting with $f$. Such pairs $(f,\psi)$ were classified in~\cite{dinh:commuting} over $\C$ (see also \cite{dinh-sibony:commuting, kaufmann}), and after a ramified cover they are all induced from a product map or a monomial map on the multiplicative $2$-torus. 
Thanks to~\cite{GNY1}, in any such case $C$ must be also $f$-preperiodic.
 Thus we expect that the dynamical Manin-Mumford conjecture 
holds unconditionally in this setting. 
Our main result confirms this expectation in the vast majority of cases.

\begin{introthm}\label{thm:DMM}
Let $f$ be a regular polynomial endomorphism of $\nA^2$ of degree $d \geq 2$, 
defined over an arbitrary  field $\nk$ of characteristic $0$. 

Let  $C \subset \nA^2$ be an irreducible algebraic curve containing infinitely many preperiodic points of $f$, and suppose that 
the  closure of $C$   in $\nP^2$ contains a point $p\in L_\infty$ 
which is not eventually superattracting. 
Then $C$ is  preperiodic under $f$. 
\end{introthm}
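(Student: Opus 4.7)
The plan is to combine arithmetic equidistribution on $\nP^2$ with a careful local analysis of the dynamics of $f$ along $L_\infty$ near the point $p$, exploiting the non-superattracting hypothesis.

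First I would reduce to the case $\nk = \ol\Q$ by standard spreading-out: the data $(f,C,p)$ is defined over a finitely generated extension of $\Q$ and, because the condition ``$p$ is not eventually superattracting'' is a Zariski open condition on the coefficients together with the orbit of $p$, one can specialize to a number field $K$ while keeping the hypotheses of the theorem. Since $f$ is regular of degree $d\geq 2$ on $\nP^2$, the pull-back of $\mathcal O(1)$ is $\mathcal O(d)$; this produces a canonical height $\hat h_f$ and an adelic semipositive continuous metrization $\ol{\mathcal O(1)}_f$ on $\mathcal O_{\nP^2}(1)$ whose zero locus of heights is precisely the set of preperiodic points.

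Second, I would apply Yuan's arithmetic equidistribution theorem to the curve $C$. The hypothesis that $C$ contains infinitely many preperiodic points gives that the essential minimum of $\hat h_f$ on $C$ vanishes, hence by a Zhang-type inequality that the canonical height of $C$ itself is zero. Equidistribution then forces, at every place $v$ of $K$, an equality of measures on $C^{\mathrm{an}}_v$ between the restriction $T_{f,v}\wedge [C]$ of the dynamical Monge–Amp\`ere current of $f$ and a measure determined by the Galois orbits of small points of $C$. In particular, at any place, the local Green function $g_v$ of $f$ is subharmonic and in fact \emph{harmonic outside the equilibrium measure's support} when restricted to $C$. The non-superattracting hypothesis on $p$ enters now: one shows that $g_v$ extends continuously across $p$ along $C^{\mathrm{an}}_v$, because the pathological behavior of $g_v$ on $L_\infty$ concentrates at superattracting cycles of $f_\infty$ (in the complex case this is classical Böttcher linearization; in the non-archimedean case this is a valuative tree analysis as developed by Favre–Jonsson). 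In particular the equidistribution equality is meaningful at $p$.

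Third, I would extract algebraicity/finiteness via iteration. Because $p$ is not eventually superattracting, on an arbitrary small neighborhood $U_v$ of $p$ the map $f_\infty$ and all its iterates are unramified along the orbit of $p$, and the slopes/branches of $C$ at $p$ are mapped forward without collapse. Combined with the equidistribution identity (in particular the equality of the ``arboreal'' or Lelong-type masses along branches of $C$ at $p$ with those obtained by iterating $f$), this forces the branches of the curves $f^n(C)$ at $f_\infty^n(p)$ to live in a finite-dimensional algebraic family. Since they are furthermore preperiodic points of $f_\infty$ avoiding the finite set of superattracting cycles, the sequence $(f^n(C))_{n\geq 0}$ takes only finitely many values in $\nP^2$, whence $C$ is $f$-preperiodic.

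The main obstacle I expect is the third step: translating the adelic measure-theoretic equality produced by equidistribution into a rigid algebraic constraint on the branches of $C$ at $p$. The passage from ``the current $T_{f,v}$ induces the same measure on $C$ as the curve does'' to ``the formal branches of $C$ at $p$ are preserved by iteration'' requires delicate local analysis—this is precisely where the non-superattracting assumption must be used to rule out the ``wild'' local behavior that obstructs a purely analytic proof, and the valuative tree at infinity (over both archimedean and non-archimedean places) is the natural tool.
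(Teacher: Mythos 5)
Your proposal has the right high-level shape (heights + harmonicity of the local Green function on $C$ + local dynamics at $p$), but two of your three steps have genuine gaps, and the third is where the paper's actual ideas live.

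\textbf{Step 1 (spreading out).} You dismiss this as ``standard'' and focus on preserving the hypothesis on $p$, but the actual difficulty is preserving the \emph{infinitude} of the set of preperiodic points on $C$ under specialization. This is not a Zariski-open condition and is far from automatic: distinct preperiodic points can collide at special fibres, and a priori the union of periodic targets hit by your infinite set could behave badly. The paper spends a whole section on this: one first uses the Shub--Sullivan theorem (uniform bound on the multiplicity of a fixed point as a root of $f^\ell(z)=z$) to show only finitely many periodic cycles are involved, then a local-degree argument at the periodic target to keep preimage chains distinct after specialization, and separately a theorem of Chio--Roeder about bifurcation at a non-persistently-preperiodic marked point to treat the parabolic case in Proposition~\ref{prop:infinity_periodic}. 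Your outline skips all of this, and it is the main new input needed beyond the number-field case.

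\textbf{Step 3 (local analysis at $p$).} You correctly flagged this as the hard step, but the mechanism you propose --- that the branches of $f^n(C)$ at $f_\infty^n(p)$ ``live in a finite-dimensional algebraic family'' and so the sequence $(f^n(C))$ is eventually periodic --- is circular: bounding the complexity of $f^n(C)$ is essentially what you are trying to prove. The paper's mechanism is different. One first constructs a local super-stable manifold $W^{\rm ss}_{\loc}(p)$, the unique $f$-invariant analytic disc transverse to $L_\infty$ at $p$, uniformly in the Archimedean and non-Archimedean settings (Theorem~\ref{thm:super-stable_manifold}). The non-superattracting hypothesis then enters as a dichotomy on the multiplier $\lambda=f_\infty'(p)$: either $\lambda$ is a root of unity, and one works over $\C$, pulls back the super-stable manifold of a nearby repelling point through a repelling petal via graph transforms (Theorem~\ref{thm:graph_transform_parabolic}), and uses the Inclination Lemma together with normality of $(f^n|_\Delta)$ (forced by harmonicity of $g|_C$) to get a contradiction unless $C=W^{\rm ss}_{\loc}(p)$ locally; or there is a place $v$ with $|\lambda|_v>1$, and the renormalization $f^n(x/\lambda^n, y)\to(x,0)$ (Proposition~\ref{prop:renormalization}) pushes the harmonicity of $g_v|_C$ down to harmonicity of the Green function of $f_\infty$ on a neighborhood of $p$ in $L_\infty$, which is impossible since a non-superattracting fixed point lies in the Julia set (non-polar support of the equilibrium measure). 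This Julia-set constraint is the key dynamical idea and is absent from your sketch; your claim that $g_v$ ``extends continuously across $p$ along $C^{\rm an}_v$'' is not the relevant point and does not by itself produce a rigidity.

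\textbf{Step 2} is essentially fine: Zhang's inequality gives $h_f(\overline C)=0$, and since every term in the height decomposition is non-negative, each $\int_{C^{\rm an}_v} g_v\,d\mu_v$ vanishes, forcing $g_v|_C$ to be harmonic on $\{g_v>0\}$. You do not actually need Yuan's equidistribution theorem here; the non-negativity of the local contributions already suffices, and equidistribution would not directly give the harmonicity you need.
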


\begin{rmk-intro}
{When $\nk$ is a number field, our proof requires only the existence of a sequence of points $p_n$ in $C$ whose canonical dynamical height
tends to $0$ (see Theorem~\ref{thm:main_precised} below),
thereby proving a version of the dynamical Bogomolov conjecture: 
for some $\eps>0$, the set of points in $C$ of dynamical height smaller than $\eps$ is finite.}
\end{rmk-intro}

\begin{introcor}\label{cor:DMM}
Under the assumptions of the theorem, if $f$ has no super-attracting points on the line at infinity, then 
any irreducible algebraic curve $C \subset \P^2$
containing  infinitely many periodic points is preperiodic, that is, the dynamical Manin-Mumford conjecture holds for $f$.
\end{introcor}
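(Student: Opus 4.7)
The plan is to follow the now-standard arithmetic approach to dynamical Manin--Mumford: reduce to a number field, apply arithmetic equidistribution, and then exploit the local dynamics at the point $p \in L_\infty \cap \bar C$. Corollary~B will then be immediate from Theorem~A, since under its hypotheses there are no superattracting periodic points on $L_\infty$ at all, so no point of $L_\infty$ is eventually superattracting (and the case $C = L_\infty$ is trivial because $L_\infty$ is totally invariant).

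\emph{Reduction and arithmetic equidistribution.} A standard spreading-out argument reduces us to the case $\nk = K$ a number field; both the infinitude of preperiodic points on $C$ and the non-superattracting property of $p$ survive specialization to a generic closed point. On $\nP_K^2$, the line bundle $\mc O(1)$ carries a canonical semipositive adelic metric whose local Green functions $G^+_v$ at each place $v$ are the dynamical Green functions of $f$. Yuan's arithmetic equidistribution theorem applied to the subvariety $\bar C \subset \nP^2_K$ then yields both $\hat h_f(\bar C) = 0$ and, at every place $v$, the equidistribution of Galois orbits of preperiodic points on $C$ towards the measure $T_v \wedge [\bar C]$ on the Berkovich analytification, where $T_v = dd^c G^+_v$.

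\emph{Local analysis at $p$.} This is the heart of the proof and is where the non-superattracting hypothesis enters. One works in the space $\mc V_p$ of valuations on $\nP^2$ centered at $p$, on which $f$ induces a natural action $f_*$. The local branch of $C$ at $p$ corresponds to a curve valuation $\nu_C \in \mc V_p$, and the adelic equidistribution forces $\nu_C$ to be compatible with the dynamical Green currents of $f$ locally at $p$. The non-superattracting assumption means that the multipliers of $f_\infty$ along the forward orbit of $p$ in $L_\infty$ never vanish, which gives $f$ a non-degenerate linear action on the tangent directions at $p$ and translates into a good contraction/expansion structure for $f_*$ on a neighborhood of $\nu_C$ in $\mc V_p$; the goal is to use this rigidity together with the equidistribution to force $\nu_C$ to be preperiodic under $f_*$, equivalently to show that the germ of $C$ at $p$ is preperiodic under $f$.

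\emph{From local to global, and the main obstacle.} The hardest step is going from local germ-preperiodicity at $p$ to global preperiodicity of $C$. I expect to use the vanishing $\hat h_f(\bar C) = 0$ together with a degree/height comparison: if $f^n(C) \neq C$ for all $n \geq 1$, the curves $f^n(C)$ form an infinite family of distinct irreducible curves of canonical height zero, sharing a preperiodic germ at $p$ (up to iteration) and of degrees controlled by the local multipliers at $p$; this should lead to a contradiction via an arithmetic Bezout or positivity estimate. The delicate point is that the valuative analysis must be performed uniformly over every place (including the non-Archimedean ones, for which the Berkovich formalism is required), and must handle the possibility of orbits visiting fixed points of $f_\infty$ with more subtle local dynamics (parabolic, linearizable, Cremer-type) without ever landing on a superattracting point.
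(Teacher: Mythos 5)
Your deduction of Corollary~B from Theorem~A is exactly right and is all the paper does (the corollary is stated without further proof as an immediate consequence): if $f$ has no superattracting points on $L_\infty$, then a fortiori no point of $L_\infty$ is eventually superattracting; for $C\neq L_\infty$ the affine part $C\cap\A^2$ is irreducible and carries all but finitely many of the given periodic points, and $\overline{C}\cap L_\infty$ is nonempty, so Theorem~A applies; the case $C=L_\infty$ is handled by total invariance.

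The remainder of your proposal is a sketch of the proof of Theorem~A itself, which is not the statement at hand, but since you included it let me flag where it diverges from the paper. The paper does not use Yuan's equidistribution theorem; it uses Zhang's inequality (their Theorem~1.6) to get $h_f(\overline{C})=0$ and deduce that $g_v|_C$ is harmonic where positive, which is a weaker but sufficient conclusion. The local analysis at $p$ is not done via dynamics on a valuation space $\mathcal V_p$; rather, the paper constructs local super-stable manifolds at $p$ (Theorem~2.1) and runs a graph-transform / renormalization argument (Propositions~2.3 and Theorem~2.4), valid at both Archimedean and non-Archimedean places, to show that if the branch of $\overline{C}$ at $p$ differs from $W^{\rm ss}_{\rm loc}(p)$ then $g_v|_C$ cannot be harmonic near $p$. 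Finally, the ``local to global'' step in the paper is not an arithmetic Bezout estimate: once the germ of $\overline{C}$ at $p$ is shown to coincide with $W^{\rm ss}_{\rm loc}(p)$, $C$ is literally $f$-invariant near $p$ and hence globally fixed by irreducibility; there is no need for a degree comparison. As you present it, the final step (``this should lead to a contradiction via an arithmetic Bezout or positivity estimate'') is a genuine gap, and the valuation-space rigidity you invoke is not substantiated; the paper's super-stable-manifold route is the concrete replacement for both.
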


 Our strategy relies on techniques from Arakelov geometry, in particular on the notion 
of canonical height, which is now classical in arithmetic dynamics, together with a variety of techniques from 
holomorphic and non-Archimedean dynamics. 

 We first assume that 
 $\nk$ is a number field, in which case we prove a stronger Bogomolov-type statement (Theorem~\ref{thm:main_precised}). 
We first recall in Section~\ref{sec:heights}, that  there exists  a canonical height
$h_f$  on $\A^2(\nk^{\alg})$, 
for which preperiodic points are exactly points of zero height. 
By a theorem of Zhang, all points of $C$ lying at infinity are  preperiodic, hence, 
by replacing $C$ by some iterate 
we may assume that they are periodic, and,  
thanks to our assumptions, one of these periodic points  
 is not superattracting for $f_\infty$.  Fix such a   point $p\in \overline C \cap L_\infty$.
A second consequence of Zhang's theorem is that or each place $v$ of $\nk$,
 the dynamical Green function $g_{f,v}(z,w) := \lim_{n\to\infty} \frac1{d^n} \log^+ \norm{f^n(z,w)}$
satisfies 
\begin{equation} \label{eq:energy}
\int_{C} g_{f,v} dd^c g_{f,v}=0.
\end{equation} 
To make sense of this equality at
  a finite place,  we consider  the Berkovich analytification of $C$, in which case $dd^c  g_{f,v}$ is 
 the Laplacian of the subharmonic function 
$g_{f,v}|_C$ in the sense of Thuillier, see~\cite{thuillier:phdthesis}. 

For an appropriate choice of $v$, 
 we may suppose that  the  periodic
  point  $p$   is either repelling or parabolic for $f_\infty$. 
 In both cases, we construct in Section~\ref{sec:super-stable}  
 a local super-stable manifold $W^{\rm ss}_{\loc}(p)$. 
  When $p$ is repelling and $v$ is Archimedean this is classical. 
We extend this construction to  the non-Archimedean setting, and allow for non-repelling $p$ 
(see Theorem~\ref{thm:super-stable_manifold}). 
Then, in both (repelling or parabolic) cases we establish 
 graph transform type estimates which will be useful for the local analysis of the Green function at $p$. 
 When $p$ is parabolic this borrows from the work of  Hakim~\cite{hakim}. 

In Section~\ref{sec:conclusion_number_field}  we combine these estimates with condition~\eqref{eq:energy} to show that, near $p$, 
   $C$ must locally coincide with $W^{\rm ss}_{\loc}(p)$, 
 which finally implies that  $C$ is periodic.   
Note that a similar argument appears in the recent work~\cite{gauthier-taflin-vigny:sparsity}.

A key point in this argument is that at the chosen place $v$, $p$ belongs to the Julia set of $f_\infty$. 
This is no longer true when $f_\infty$ is superattracting at $p$, and we are not able to conclude in this case. 
{Still, this situation leads to interesting dynamical considerations and objects, which we explore in~\cite{DFR2}.}

Finally, we develop a specialization argument in Section~\ref{sec:spec} to reduce Theorem~\ref{thm:DMM} for an arbitrary $\nk$
 to the case where $\nk$ is a number field.  So here we rather deal with an algebraic family of 
endomorphisms of $\P^2$ parameterized by some
algebraic variety $S$. 
The main issue is to ensure that for such a family, an infinite set of preperiodic points cannot shrink to a finite set 
 too often on  $S$. To do so, one needs
to control collisions of periodic points (using the 
Shub-Sullivan  theorem~\cite{shub-sullivan} in the spirit of~\cite{DMM-henon}); 
and the splitting of local preimages of a super-attracting cycle, 
a phenomenon that was studied in particular by Chio and Roeder~\cite{chio-roeder}.

{Note that 
an alternative approach to  the proof of Theorem~\ref{thm:DMM} for   arbitrary $\nk$ is to use  Moriwaki's  height theory as presented in~\cite{zbMATH07150972}.} 

\begin{rmk-intro}
{Several interesting developments appeared after the first release of this paper. 
\begin{enumerate}
\item A far-reaching generalization of the dynamical Manin-Mumford conjecture was formulated by DeMarco and Mavraki~\cite{DeMarco-Mavraki:DMM}.
\item  When the restriction of the map $f$ has no exceptional point, then J. Xie~~\cite[Theorem~1.20]{MR4711366} 
proved that there can be only finitely many $f$-invariant curves, except if $f$ is homogenous. More recently, Zhong~\cite[\S~7]{arXiv:2508.13873} gave 
a  classification of regular polynomial endomorphisms of $\nA^2$ having infinitely many periodic curves of the same degree.
It is based on former results by Dabija-Jonsson~\cite{MR2384901} and 
Favre-Pereira~\cite{MR3421725}. 
\item In view of the recent results of Gauthier-Taflin-Vigny~\cite[Theorem~D]{gauthier-taflin-vigny:sparsity}, it is natural to expect the following statement to be true: 
there exists an open Zariski dense subset $U$ of the space of regular polynomial endomorphism of $\nA^2$ of a fixed degree, and for any integer $\delta$ a constant $C(\delta)>0$ such that 
for any $f\in U$ and any curve of degree at most $\delta$, then $C$ contains at most $C(\delta)$ preperiodic points.
 The case $\deg(f)=2$ and $C$ is the critical locus of $f$ is treated in op.cit., relying on 
  sophisticated dynamical and arithmetic  techniques.
\end{enumerate}
}
\end{rmk-intro}

\subsection*{Ackowledgements} 
{We thank the referees for their careful reading and 
 constructive remarks.  
 }
 
\section{Dynamical heights} \label{sec:heights}

In this section, we recall some basic facts on canonical heights attached to 
endomorphisms of the projective plane defined over a number field. Our purpose is to establish 
Proposition~\ref{prop:Green_harmonic}, which is the key arithmetic geometry input in our main theorem.

Throughout this section, we assume that  $\nk$ is 
a number field. 

\subsection{Vocabulary of number fields}
We denote by $M_\nk =\{ v\}$ the set of places of $\nk$, that is,  the set of all 
multiplicative norms $|\cdot|_v$ on $\nk$ that restrict to either the standard euclidean norm $|\cdot|_\infty$, or to a $p$-adic norm
$|\cdot|_p$ on $\Q$ for some prime number $p>1$. We normalize the $p$-adic norm by 
$|p|_p= p^{-1}$. We let $\nk_v$ be the completion of $\nk$ w.r.t. $|\cdot|_v$, and write $n_v:= [\nk_v:\Q_v]$. 
The product formula asserts that  
for any $a \in \nk$, we have
\[
  \prod_{v\in M_\nk}\abs{a}^{n_v}_v=1~.
\]
The set $M_\nk$ splits into the finite set $M^\infty_\nk$ of Archimedean places (whose restriction to $\Q$ is $|\cdot|_\infty)$, 
and the set of finite (or non-Archimedean) places. 

When $v\in M^\infty_\nk$, the algebraic closure 
$\C_v$ of $\nk_v$ is isometric to $\C$. When $v$ is a finite place extending $|\cdot|_p$ on $\Q$, then $|\cdot|_v$ extends canonically to 
$\nk_v^{\alg}$, and its completion $\C_v$ 
is both complete and algebraically closed.

\subsection{Regular polynomial maps}
Let $(z,w)$ be affine coordinates on the affine plane $\nA_\nk^2$. 
We also consider homogeneous coordinates $[z_0:z_1:z_2]$ on the projective plane $\nP_\nk^2$
and identify the affine plane $\nA_\nk^2$ with the Zariski open set $z_0\neq 0$ so that 
$z = z_1/z_0$ and $w= z_2/z_0$. We denote by $L_\infty = \{ z_0 =0\}$ the line at infinity.

Let $f \colon \nA_\nk^2 \to \nA_\nk^2$ be any polynomial self-map of the affine plane of degree $d \geq 2$
that extends to an endomorphism of $\nP_\nk^2$. In the coordinates $z,w$, it is 
given by
\[
f(z,w)=\big(P(z,w),Q(z,w)\big)\text,
\]
where $P,Q \in \nk[z,w]$ satisfy $\max\{\deg P, \deg Q\}=d$.
The fact that $f$ extends to a regular endomorphism $\overline{f} \colon \nP_\nk^2\to \nP_\nk^2$ is equivalent to say that 
$P=P_d + \text{l.o.t.}$ and $Q=Q_d + \text{l.o.t.}$, 
where $P_d$ and $Q_d$ are homogeneous polynomials of degree $d$ without common factors.

For $n\in \N$ we write 
$f^n(z,w)=\big(P_n(z,w),Q_n(z,w)\big)$.
The restriction of $f_\infty$ to $L_\infty$ is an endomorphism of $\nP^1_\nk$
given in homogeneous coordinates by
\[f_\infty([z_1:z_2])= [P_d(z_1,z_2):Q_d(z_1,z_2)]~.\]

\subsection{Green functions}\label{sec:local green} 
The next proposition follows from the Nullstellensatz (see e.g.~\cite[Theorem 3.11]{silverman:arithdynsys}).
\begin{prop}
For any $v\in M_\nk$, there exists a constant $C_v \ge 1$ such that 
\begin{equation}\label{eq:stand}
C_v^{-1}
\le
\frac{ \max\{1,|P(z,w)|_v,|Q(z,w)|_v\}}
{\max\{1,|z|_v,|w|_v\}^d}
\le
C_v
\end{equation}
for all $z,w \in \C_v$. 
Moreover, for all but finitely many $v\in M_\nk$ we may take $C_v = 1$.
\end{prop}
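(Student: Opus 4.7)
The plan is to deduce both inequalities from a Nullstellensatz identity attached to the projective extension of $f$. Let $\widetilde P, \widetilde Q \in \nk[z_0, z_1, z_2]$ denote the degree-$d$ homogenizations of $P, Q$, so that $\widetilde P(1, z, w) = P(z, w)$ and similarly for $Q$. The regularity assumption on $f$ is precisely the statement that $z_0^d, \widetilde P, \widetilde Q$ have no common zero in $\nP^2_{\ol\nk}$; equivalently, the ideal they generate in $\nk[z_0, z_1, z_2]$ contains a power of the irrelevant maximal ideal $(z_0, z_1, z_2)$.

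The upper bound is immediate: expanding $P$ and $Q$ into monomials $z^i w^j$ with $i + j \le d$ and applying the (triangle or ultrametric) inequality yields
\[
\max\bigl(|P(z,w)|_v,|Q(z,w)|_v\bigr) \le K_v \, \max(1, |z|_v, |w|_v)^d,
\]
for a constant $K_v$ depending only on a finite set of coefficients of $P, Q$; combined with the trivial bound $1 \le \max(1, |z|_v, |w|_v)^d$, this gives the upper half. For the lower half, the main input is Hilbert's Nullstellensatz applied to the homogeneous ideal above: there exist an integer $N \ge d$ and homogeneous polynomials $A_i, B_i, D_i \in \nk[z_0, z_1, z_2]$ of degree $N - d$ such that
\[
z_i^N = A_i\, z_0^d + B_i\, \widetilde P + D_i\, \widetilde Q, \qquad i = 0, 1, 2.
\]
Specializing to $z_0 = 1, z_1 = z, z_2 = w$ and bounding each $A_i(1, z, w), B_i(1, z, w), D_i(1, z, w)$ by the same monomial expansion trick, one obtains
\[
\max(1, |z|_v, |w|_v)^N \le K'_v \, \max\bigl(1, |P(z, w)|_v, |Q(z, w)|_v\bigr) \, \max(1, |z|_v, |w|_v)^{N - d},
\]
and dividing through by $\max(1, |z|_v, |w|_v)^{N - d} \ge 1$ yields the lower bound with some $C_v$ depending on the Bezout witnesses.

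For the final ``almost all $v$'' statement, observe that $P, Q$ together with the Bezout witnesses $A_i, B_i, D_i$ involve only finitely many nonzero coefficients, all lying in $\nk$, hence satisfying $|\cdot|_v \le 1$ at all but finitely many non-Archimedean places. At any such $v$, the ultrametric inequality collapses each sum-of-monomials bound above to a max-of-monomials bound with implicit constant $1$, so one may take $K_v = K'_v = C_v = 1$. The only substantive step in the argument is the Nullstellensatz application producing the Bezout identities; because $\nk$ has characteristic zero (hence is perfect), these witnesses can be chosen in $\nk$ directly, so no descent issue arises and the statement is entirely elementary once this input is available.
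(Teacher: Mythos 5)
Your proof is correct and follows exactly the Nullstellensatz/Bezout-identity route that the paper alludes to when it cites Silverman's Theorem 3.11 without spelling out the argument. The upper bound by monomial expansion, the lower bound from the homogeneous Bezout witnesses for $z_i^N$ in the ideal $(z_0^d,\widetilde P,\widetilde Q)$ specialized at $z_0=1$, and the ``almost all $v$'' conclusion from the finiteness of coefficients combined with the ultrametric inequality are all exactly the standard proof; the remark that the witnesses may be taken in $\nk$ (via linear algebra over $\nk$) closes the one potential gap cleanly.
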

By the previous proposition, the sequence of functions
\[g_{v,n}(z,w):= \frac1{d^n}\log  \max\{1,|P_n(z,w)|_v, |Q_n(z,w)|_v\}\]
converges uniformly on $\C^2_v$ to a continuous function $g_v$, and
the next proposition follows. 
\begin{prop}\label{prop:green}
For any $v\in M_\nk$, the function $g_v \colon \C_v^2 \to \R_+$
is continuous, it satisfies the invariance equation
$g_v \circ f = d g_v$, and we have
\[
\big| g_v(z,w) - \log \max\{1,|z|_v,|w|_v\} \big| \le \frac{\log C_v}{d-1}
~.\]
The set $\{(z,w) \in \C_v^2,\, g_v(z,w)=0\}$ coincides with the set of points having bounded orbits.
\end{prop}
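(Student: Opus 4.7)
The plan is to show that the sequence $g_{v,n}$ converges uniformly on $\C_v^2$ by a telescoping estimate, and that the three stated properties then follow formally.

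Set $h_v(z,w) := \log\max\{1,|z|_v,|w|_v\}$. The key observation is that $g_{v,n} = \frac{1}{d^n}\,h_v \circ f^n$, because $h_v(f^n(z,w)) = \log\max\{1,|P_n(z,w)|_v,|Q_n(z,w)|_v\}$. Taking logs in~\eqref{eq:stand} gives the fundamental inequality
\[
\bigl| h_v \circ f - d\,h_v \bigr| \le \log C_v
\]
on $\C_v^2$. Applying this with $f$ replaced by $f^n$ at the point $f^n(z,w)$ (or equivalently composing the inequality with $f^n$) yields
\[
\bigl| g_{v,n+1}(z,w) - g_{v,n}(z,w)\bigr| = \frac{1}{d^{n+1}}\bigl| h_v \circ f^{n+1} - d\,h_v \circ f^n\bigr| \le \frac{\log C_v}{d^{n+1}}.
\]
Since this bound is summable, the partial sums form a uniformly Cauchy sequence, so $g_{v,n} \to g_v$ uniformly on $\C_v^2$. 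As each $g_{v,n}$ is continuous (the absolute value $|\cdot|_v$ is continuous on $\C_v$ in both the Archimedean and non-Archimedean cases, and $\log\max$ of continuous non-negative quantities bounded below by $1$ is continuous), the limit $g_v$ is continuous.

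Telescoping from $n=0$ gives the desired estimate
\[
|g_v - h_v| \le \sum_{n=0}^\infty \frac{\log C_v}{d^{n+1}} = \frac{\log C_v}{d-1}.
\]
The functional equation follows by observing that $g_{v,n}\circ f = d\,g_{v,n+1}$, and passing to the limit. Finally, for the characterization of the filled Julia set: if the orbit $(f^n(z,w))_n$ is bounded in $\C_v^2$, then $h_v\circ f^n$ is bounded, hence $g_v = \lim d^{-n}\,h_v\circ f^n = 0$. Conversely, if $g_v(z,w) = 0$, then by the functional equation $g_v(f^n(z,w)) = d^n g_v(z,w) = 0$, so the inequality $|g_v - h_v|\le \log C_v/(d-1)$ applied at $f^n(z,w)$ gives $h_v(f^n(z,w))\le \log C_v/(d-1)$ uniformly in $n$, which means the orbit is bounded.

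There is no real obstacle here: the whole proposition is a direct consequence of the telescoping estimate derived from the Nullstellensatz inequality~\eqref{eq:stand}. The only mild care needed is that the argument must work uniformly at both Archimedean and non-Archimedean places, but since we have only used the multiplicativity and the subadditivity (or ultrametric) property of $|\cdot|_v$ implicitly through~\eqref{eq:stand}, this is automatic.
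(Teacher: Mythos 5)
Your proposal is correct and takes essentially the same approach as the paper, which merely asserts that the telescoping estimate from~\eqref{eq:stand} gives uniform convergence of $g_{v,n}$ to a continuous $g_v$ and that the proposition then follows; you have simply supplied the routine details.
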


We shall also consider  the global Green function in $\C_v^3$. 
Write $\wt{P}(z_0,z_1,z_2) = z_0^dP(\frac{z_1}{z_0},\frac{z_2}{z_0})$ and $\wt{Q}(z_0,z_1,z_2) = z_0^dQ(\frac{z_1}{z_0},\frac{z_2}{z_0})$
so that $F (z_0, z_1, z_2) = (z_0 ^d, \wt{P}, \wt{Q}) $ is a homogenous map of degree $d$ that lifts $f$ to $\C_v^3$. 
Observe that in homogenous coordinates,~\eqref{eq:stand} can be rewritten as follows: 
\[
C_v^{-1}
\le
\frac{ \max\{|z_0|^d,|\wt{P}(z_0, z_1, z_2)|_v,|\wt{Q}(z_0, z_1, z_2)|_v\}}
{\max\{|z_0|_v,|z_1|_v,|z_2|_v\}^d}
\le
C_v
\]
so that the next result also holds. 
\begin{prop}\label{prop:green_global}
The function $G_v(z_0,z_1,z_2) = g_v( z_1/z_0, z_2/z_0) + \log|z_0|$
is continuous on $\C^3_v\setminus \{0\}$, $1$-homogeneous (that is, $G_v(\lambda Z) = \log\abs{\lambda}+G_v(Z)$), 
and satisfies
$G_v \circ F = d G_v$. 
We have
\[
\abs{G_v(z_0,z_1,z_2) - \log \max\{|z_0|_v,|z_1|_v,|z_2|_v\} } \le \frac{\log C_v}{d-1}
~.\]
The set $\{(z_0,z_1,z_2) \in \C_v^3,\, G_v(z_0,z_1,z_2)\le 0\}$ coincides with the set of points having bounded $F$-orbits.
\end{prop}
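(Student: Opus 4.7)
The plan is to mirror the construction of $g_v$ from Proposition~\ref{prop:green}, but applied directly to the homogeneous lift $F\colon \C_v^3\to\C_v^3$. The key observation is that if one writes $F^n(Z)=(z_0^{d^n},\wt P_n(Z),\wt Q_n(Z))$ with $\wt P_n,\wt Q_n$ homogeneous of degree $d^n$, then the approximating sequence
\[
G_{v,n}(Z):=d^{-n}\log\max\{\abs{z_0}_v^{d^n},\abs{\wt P_n(Z)}_v,\abs{\wt Q_n(Z)}_v\}
\]
is continuous on all of $\C_v^3\setminus\{0\}$, including across $\{z_0=0\}$, whereas the affine counterpart $g_{v,n}$ is only defined on $\{z_0\neq 0\}$. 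All the claims will be obtained by transporting the affine argument to this homogeneous setting.

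First I apply the homogeneous reformulation of \eqref{eq:stand}, stated just before the proposition, to the point $W=F^n(Z)$ to obtain
\[
\bigl|\,G_{v,n+1}(Z)-G_{v,n}(Z)\,\bigr|\le d^{-(n+1)}\log C_v
\]
uniformly on $\C_v^3\setminus\{0\}$. A telescopic summation then shows that $(G_{v,n})$ converges uniformly to a continuous function $G_v$, and that $\abs{G_v(Z)-\log\max\{\abs{z_0}_v,\abs{z_1}_v,\abs{z_2}_v\}}\le\log C_v/(d-1)$, which gives both continuity and the stated uniform bound.

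Next I verify the two functional identities at the finite level. The homogeneity of $\wt P_n,\wt Q_n$ of degree $d^n$ immediately implies $G_{v,n}(\lambda Z)=\log\abs{\lambda}_v+G_{v,n}(Z)$, and the relation $F^{n+1}=F^n\circ F$ gives $d\,G_{v,n+1}=G_{v,n}\circ F$; passing to the limit yields both the $1$-homogeneity and the invariance $G_v\circ F=d\,G_v$. To identify $G_v$ with the formula $g_v(z_1/z_0,z_2/z_0)+\log\abs{z_0}_v$ stated in the proposition, I factor $z_0^{d^n}$ out of the max on $\{z_0\neq 0\}$ and recognize $G_{v,n}(Z)=\log\abs{z_0}_v+g_{v,n}(z_1/z_0,z_2/z_0)$; the two definitions agree in the limit there, and the uniform bound automatically extends $G_v$ continuously across $\{z_0=0\}$.

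For the last assertion, the uniform bound combined with equivariance gives
\[
\bigl|\,\log\max_i\abs{F^n(Z)_i}_v-d^n G_v(Z)\,\bigr|\le\log C_v/(d-1),
\]
so $G_v(Z)>0$ forces the components of $F^n(Z)$ to blow up, while $G_v(Z)\le 0$ keeps the orbit in a coordinate ball of radius $C_v^{1/(d-1)}$. There is no genuine obstacle in this argument: it is a direct transcription of Proposition~\ref{prop:green} to projective coordinates, the only real content being the observation that the homogeneous maximum is well defined everywhere off the origin, which allows the limit to pick up continuity across the line at infinity.
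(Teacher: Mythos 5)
Your argument is correct and is exactly the proof the paper has in mind: the paper states the homogeneous reformulation of the Nullstellensatz bound immediately before the proposition and then says "so that the next result also holds," leaving the routine verification to the reader, which is precisely what you have carried out. Your remark that defining $G_{v,n}$ directly on $\C_v^3\setminus\{0\}$ (rather than as $\log|z_0|+g_{v,n}(z_1/z_0,z_2/z_0)$) is what makes continuity across $\{z_0=0\}$ automatic is the one point worth emphasizing, and you got it right.
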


\subsection{Canonical heights on points}
We refer to~\cite{ACL2} for generalities on heights.
Consider the line bundle on $\mc{O}(1)\to \nP^2_\nk$. The space of sections of this line bundle
can be canonically identified with the space of linear forms  
$a_0 z_0 + a_1 z_1 + a_2 z_2$ with $a_i \in \nk$. More precisely, in the trivialization of the bundle 
over $\set{z_i\neq 0}$, this section is given by 
$\unsur{z_i} a_0 z_0 + a_1 z_1 + a_2 z_2$.  

For any $v\in M_\nk$, we consider the line bundle $\mc{O}(1)\to \nP^2_{\C_v}$ and endow it with the
metrization $|\cdot|_v$ induced by $G_v$, in the sense that 
any section  $\sigma = a_0 z_0 + a_1 z_1 + a_2 z_2$ as above 
\begin{equation}\label{eq:metriz}
|\sigma|_v ([z_0:z_1:z_2]) = \abs{a_0 z_0 + a_1 z_1 + a_2 z_2} 
  e^{-G_v(z_0, z_1, z_2)} ~
\end{equation}
(this expression is well-defined thanks to the homogeneity property of $G_v$). 
Let us now explain how this collection of metrizations defines a function on the set of algebraic points in $\nP^2$
as well as on the set of all algebraic curves in $\nP^2_\nk$ defined by an equation 
with coefficients in $\nk^{\alg}$.

For any $ p \in  (\nk^{\alg})^3\setminus\{0\}$, we set 
\[
h_f(p)
:=
\frac1{N(p)} \sum_{p' \in \Gal\cdot p} \left(\sum_{v\in M_\nk} n_v G_v(p')\right)
\]
where $\Gal$ denotes the absolute Galois group of $ \nk^{\alg}$ over $\nk$, and
$N(x)$ is the cardinality of the set $\Gal\cdot x \subset ( \nk^{\alg})^3$.

The product formula entails that 
$h_f(z_0,z_1,z_2) = h_f(\lambda z_0,\lambda z_1,\lambda z_2) $ for any $\lambda\in  \nk^{\alg}$
so that we have a well-defined function $h_f \colon \nP^2( \nk^{\alg}) \to \R$. 

\begin{prop}
The function $h_f$ takes non-negative values, and satisfies $h_f \circ f = d h_f$. 
The set $\{h_f =0\}$ coincides with the set of preperiodic points of $f$. 
Furthermore,  for any $(z,w) \in (\nk^{\alg})^2$ we have
\[
h_f(z,w)
:=
\frac1{N(z,w)} \sum_{(z',w')\in \Gal\cdot (z,w)} \left(\sum_{v\in M_\nk} n_v g_v(z',w')\right)
~.\]
\end{prop}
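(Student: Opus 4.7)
The plan is to establish the four assertions in the order they are stated, leveraging the invariance $G_v \circ F = d G_v$ from Proposition~\ref{prop:green_global} and the product formula, and appealing to Northcott's finiteness theorem for the characterization of preperiodic points.

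First I would verify the functional equation $h_f \circ f = d h_f$. The point is that although the Galois orbits of $p$ and $F(p)$ may have different cardinalities, the natural map $p' \mapsto F(p')$ sends $\Gal\cdot p$ onto $\Gal\cdot F(p)$, and is uniformly $N(p)/N(F(p))$-to-one. Combined with $G_v\circ F = d\,G_v$, this gives
\[
\frac1{N(p)} \sum_{p' \in \Gal\cdot p} G_v\bigl(F(p')\bigr)
= \frac1{N(F(p))} \sum_{q' \in \Gal\cdot F(p)} G_v(q'),
\]
and summing over $v\in M_\nk$ with weights $n_v$ yields $h_f(F(p)) = d\, h_f(p)$. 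The $1$-homogeneity of $G_v$ together with the product formula yields that $h_f$ descends to a well defined function on $\nP^2(\nk^{\alg})$. The affine formula is then immediate, since in the chart $z_0=1$ the $1$-homogeneity gives $G_v(1,z,w)=g_v(z,w)+\log|1|_v = g_v(z,w)$.

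Next I would compare $h_f$ to the standard Weil height. Proposition~\ref{prop:green_global} provides the uniform bound $|G_v - \log\max\{|z_0|_v,|z_1|_v,|z_2|_v\}| \le \log C_v/(d-1)$, with $C_v=1$ for all but finitely many $v$. Summing over places with weights $n_v$ and averaging over the Galois orbit gives a constant $C>0$ (independent of $p$) such that
\[
\bigl| h_f(p) - h_{\rm W}(p) \bigr| \le C
\qquad\text{for all } p \in \nP^2(\nk^{\alg}),
\]
where $h_{\rm W}$ is the naive Weil height on $\nP^2$. Since $h_{\rm W}\ge 0$, applying this to $f^n(p)$ and using $h_f(p) = h_f(f^n(p))/d^n$ yields $h_f(p) \ge -C/d^n$ for every $n$, so $h_f(p)\ge 0$.

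The third step, the identification $\{h_f=0\} = \preper(f)$, is where the main arithmetic content appears. If $p$ is preperiodic then the forward orbit is finite, so $h_{\rm W}(f^n(p))$ is bounded, and $h_f(p)=h_f(f^n(p))/d^n$ combined with the comparison above forces $h_f(p)=0$. Conversely, if $h_f(p)=0$ then $h_f(f^n(p))=0$ for all $n$, so $h_{\rm W}(f^n(p)) \le C$ uniformly. Since all iterates $f^n(p)$ live in $\nP^2(\nk(p))$, whose degree over $\nk$ is fixed, \emph{Northcott's theorem} ensures that only finitely many such points can have Weil height at most $C$. Hence the orbit of $p$ is finite and $p$ is preperiodic.

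The routine bookkeeping regarding Galois orbits and the product formula is straightforward; the only genuinely nontrivial input is the Northcott step, which is the main obstacle in the sense that it is the only ingredient not already built into the construction of $G_v$. Everything else follows from the explicit estimates of Section~\ref{sec:local green} and the definition of $h_f$.
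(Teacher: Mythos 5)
Your proof is correct and takes essentially the same approach as the paper, which gives no argument beyond citing Northcott's theorem via Call--Silverman (\cite[Corollary 1.1.1]{MR1255693}); the steps you spell out (Galois-equivariance of $F$ giving the functional equation, the $O(1)$-comparison with the naive Weil height from Proposition~\ref{prop:green_global}, and the Northcott finiteness argument) are precisely the standard Call--Silverman argument that the citation encapsulates.
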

As above $\Gal$ denotes the absolute Galois group of $ \nk^{\alg}$ over $\nk$, and
$N(z,w)$ is the cardinality of the set $\Gal\cdot (z,w) \subset (\nk^{\alg})^2$.
The proof follows directly from Northcott's theorem, see~\cite[Corollary~1.1.1]{MR1255693}.

\subsection{Analytification of  affine curves}
Let $C$ be any irreducible algebraic curve in $\nA^2_\nk$ defined by an equation $\{R=0\}$
with $R \in \nk[z,w]$. Denote by $\overline{C}$ the Zariski closure of $C$ in $\nP^2$. 

Fix any place $v\in M_\nk$. We denote by
$C^{\an}_v$ the analytification in the sense of Berkovich of $C$ over $\C_v$. 
This is a connected, locally connected and locally compact space. When
$|\cdot|_v$ is Archimedean, hence $\C_v$ is isometric to $\C$,    
$C^{\an}_v$ is the complex
analytic subspace (possibly with singularities) defined as usual by 
the vanishing of $R$ in $\C_v^2$. 
When $|\cdot|_v$ is non-Archimedean, then 
$C^{\an}_v$ is defined as the set of multiplicative semi-norms on the ring
$\C_v[z,w]/(R)$ whose restriction to the base field equals $|\cdot|_v$.
A point is said to be rigid when the semi-norm has non-trivial kernel. 

One can also define the analytification of $\overline{C}$ by considering suitable affine coordinates in $\nP^2$
and patching the previous construction in a natural way, see~\cite[\S 3.4]{MR1070709}. Observe that $\overline{C}^{\an}_v \setminus C^{\an}_v$
consists of rigid points.

Suppose first $v$ is Archimedean. The metrization of $\mc{O}(1)$ defined by~\eqref{eq:metriz}
induces a measure $\mu_{C,v}$ on $\overline{C}^{\an}_v$ which is locally defined by 
 $\mu_{C,v} := \Delta \log \abs{\sigma}_v$ where $\sigma$ is a local section of  $\mc{O}(1)$.
  The plurisubharmonicity of $G_v$ ensures that  $\mu_{C,v}$ is a positive measure. 
 The  Lelong-Poincaré  formula  implies that 
  the mass of $\mu_{C,v}$ is equal to $\deg(C)$, and we have $\mu_{C,v} = \Delta (g_v|_{C^{\an}_v})$ 
  on $C^{\an}_v$. Observe that since $G_v$ is continuous, $\mu_{C,v}$ gives no mass to points.

  The construction is completely analogous in the non-Archimedean case.
We again obtain a positive measure $\mu_{C,v}$ on $\overline{C}^{\an}_v$ of total mass $\deg(C)$ which is given in 
$C^{\an}_v$ by  $\mu_{C,v} = \Delta (g_v|_{C^{\an}_v})$ where $\Delta$ is the Laplace operator defined by Thuillier~\cite{thuillier:phdthesis}.
Observe that the continuity of the metrization implies that $\mu_{C,v}$ does not
 charge any rigid point (but it may still charge some non-rigid point
in $C^{\an}_v$). 
We refer to~\cite[\S 1.3]{ACL2} for the details of the constructions. 

\subsection{Canonical heights on curves}
Let $C$ be any irreducible algebraic curve in $\nA^2_\nk$ as in the previous section. 
We now define the canonical height of the curve $\overline{C}$ following the recipe  given  in~\cite[\S 3.1.2]{ACL2}, 
taking $z_0$ as a section of $\mc{O}(1)$ (note that this section vanishes exactly along the line at infinity). 
We obtain: 
\begin{equation}\label{eq:01}
h_f(\ol{C}) := \sum_{p\in \overline{C} \cap L_\infty}  (\overline{C}, L_\infty)_p \times  h_f(p) + \sum_{v\in M_\nk}  n_v
\int_{C^{\an}_v}  g_v d \mu_v~,
\end{equation}
{where $ (\overline{C}, L_\infty)_p$ denotes the   intersection multiplicity of  $\overline{C}$ and $L_\infty$ at $p$}. 
Note that $h_f(\ol{C}) \ge 0$ because  the canonical height is non-negative on closed points, and the Green functions $g_v$
are also non-negative.

Define the essential minimum of $h_f$ by the following formula: 
\[
 \essmin_C(h_f) := \sup_{F \text{ finite} \subset \overline{C}(\nk^{\alg})} \inf_{\overline{C}(\nk^{\alg}) \setminus F} h_f~.
\]
\begin{thm}[Zhang's inequality~{\cite[Theorem 1.10]{zhang:smallpointsadelicmetrics}}]\label{thm:Zh}
We have
\[
2  \essmin_C(h_f)\ge \frac{h_f(\overline{C})}{\deg(C)} \ge  \essmin_C(h_f)+ \inf_{p\in \overline{C}(\nk^{\alg})} h_f(p)
~.\]
\end{thm}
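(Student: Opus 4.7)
This is a version for curves of Zhang's successive-minima inequality for adelic metrized line bundles; my plan is to follow the original strategy of~\cite{zhang:smallpointsadelicmetrics}. The first step is to interpret $h_f(\overline{C})$ as an arithmetic self-intersection. Propositions~\ref{prop:green} and~\ref{prop:green_global} show that the data $(G_v)_{v \in M_\nk}$ equip $\mc{O}(1)$ on $\nP^2_\nk$ with a continuous adelic metrization $\overline{\mc{O}(1)}$ that is semipositive at every place (plurisubharmonic at the Archimedean ones, subharmonic in Thuillier's sense at the finite ones), placing us squarely in Zhang's formalism; the defining formula~\eqref{eq:01} is then exactly $\widehat{c}_1(\overline{\mc{O}(1)}|_{\overline{C}})^2$. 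For any real $t$, twisting the adelic metric so that sections are rescaled by $e^{-t}$ at every place yields a bundle $\overline{\mc{O}(1)}(-t)$ whose arithmetic self-intersection on $\overline{C}$ equals $h_f(\overline{C}) - 2t\deg(C)$ and whose induced height on points is $h_f - t$.

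For the upper bound $2\essmin_C(h_f) \geq h_f(\overline{C})/\deg(C)$, I would argue by contradiction. Fix any $t < h_f(\overline{C})/(2\deg(C))$; then $\overline{\mc{O}(1)}(-t)$ has strictly positive arithmetic self-intersection on $\overline{C}$. Zhang's arithmetic Hilbert-Samuel theorem for continuous semipositive adelic metrics produces, for all sufficiently large $N$, a nonzero global section $s \in H^0(\overline{C}, \mc{O}(1)^{\otimes N})$ satisfying $\|s\|_v \leq e^{-tN}$ at every place. The standard identity expressing the height of a closed point as a sum of local values $-\log\|s(p)\|_v$ then yields $h_f(p) \geq t$ for every $p \in \overline{C}(\nk^{\alg})$ outside the finite zero divisor of $s$; hence $\essmin_C(h_f) \geq t$, and letting $t$ tend to $h_f(\overline{C})/(2\deg(C))$ gives the claim.

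For the lower bound $h_f(\overline{C})/\deg(C) \geq \essmin_C(h_f)+\inf h_f$, I would twist by $-\inf h_f$ to reduce to the case $\inf h_f = 0$, where it suffices to show $h_f(\overline{C}) \geq \essmin_C(h_f) \deg(C)$. This is the arithmetic analog of Minkowski's second theorem: one studies the successive slopes of the lattice $H^0(\overline{C}, \mc{O}(1)^{\otimes N})$ filtered by small sections at every place, compares the first slope with the essential minimum (via the argument in Step~2) and the sum of all slopes with the arithmetic self-intersection, and deduces the desired inequality by a concavity/averaging argument in the spirit of the arithmetic Hilbert-Samuel theorem.

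The principal technical obstacle is the uniform application of arithmetic Hilbert-Samuel across all places simultaneously. This is handled cleanly in our setting by the explicit form of the Green functions: the estimate $|g_v - \log\max\{1,|z|_v,|w|_v\}| \leq \log C_v/(d-1)$ from Proposition~\ref{prop:green}, combined with the fact that $C_v = 1$ at almost every place, gives exactly the uniform continuity and semipositivity of the adelic metrization required for Zhang's formalism, so no additional approximation of the $G_v$ by model metrics is needed.
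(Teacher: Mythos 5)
The paper does not prove this statement: Zhang's inequality is imported directly from \cite{zhang:smallpointsadelicmetrics}, Theorem~1.10, as a black box, so there is no in-paper argument to compare yours against. Your proposal is an outline of Zhang's own proof, which is the appropriate thing to sketch.

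Two remarks. First, a sign issue: with the usual convention that rescaling the metric by $e^{-t}$ at every place (so $\|\sigma\|'_v=e^{-t}\|\sigma\|_v$) raises the height on points to $h_f+t$, the arithmetic self-intersection becomes $h_f(\overline{C})+2t\deg(C)$, not $h_f(\overline{C})-2t\deg(C)$; your two displayed normalizations are consistent only with the opposite rescaling $e^{+t}$. The strategy survives, but the bookkeeping should be fixed. Second, and more substantively, the lower bound $h_f(\overline{C})/\deg(C)\geq\essmin_C(h_f)+\inf h_f$ is the delicate half of the theorem, and your treatment of it is an announced plan rather than an argument. The sentence ``compares the first slope with the essential minimum\dots and deduces the desired inequality by a concavity/averaging argument'' is exactly the content that needs to be proved: one must show that the normalized maximal slope of the filtered space $H^0(\overline{C},\mc{O}(N))$ converges to $\essmin_C(h_f)$, that the normalized arithmetic degree of that space recovers the arithmetic self-intersection, and then derive the Minkowski-type comparison between them. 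None of this follows from your Step~2. Zhang's own proof of this direction instead cuts $\overline{C}$ with the divisor of a small section and controls both the resulting intersection number and the local integrals of $-\log\|s\|_v$ by the successive minima; the slope-theoretic route you gesture at does exist in the literature, but it requires substantial detail that is not supplied here. As written, this half of the inequality is not proved.
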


Since $h_f(x)=0$ if and only if $x$ is preperiodic, we obtain:
\begin{cor}
An  irreducible algebraic curve $C$ containing infinitely many $f$-preperiodic points satisfies $h_f(\overline{C})=0$.
\end{cor}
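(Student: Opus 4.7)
The plan is to deduce the statement as a direct application of Zhang's inequality (Theorem~\ref{thm:Zh}), using the characterization of preperiodic points as the zero locus of the canonical height on closed points.

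First, I would unpack the definition of the essential minimum. Since $C$ contains infinitely many $f$-preperiodic points, and by the earlier proposition a point $p \in \nP^2(\nk^{\alg})$ satisfies $h_f(p) = 0$ if and only if $p$ is preperiodic, the set $\{p \in \overline{C}(\nk^{\alg}) : h_f(p) = 0\}$ is infinite. Hence for any \emph{finite} subset $F \subset \overline{C}(\nk^{\alg})$, the complement $\overline{C}(\nk^{\alg}) \setminus F$ still contains a preperiodic point, so
\[
\inf_{\overline{C}(\nk^{\alg}) \setminus F} h_f \;\le\; 0.
\]
Combined with the non-negativity of $h_f$, taking the supremum over such $F$ gives $\essmin_C(h_f) = 0$.

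Next, I would apply the upper bound in Zhang's inequality,
\[
\frac{h_f(\overline{C})}{\deg(C)} \;\le\; 2\,\essmin_C(h_f) \;=\; 0,
\]
so that $h_f(\overline{C}) \le 0$. Finally, I would invoke the non-negativity of $h_f(\overline{C})$ already noted after~\eqref{eq:01} (it follows from $h_f \ge 0$ on closed points together with $g_v \ge 0$ for every place $v$) to conclude that $h_f(\overline{C}) = 0$.

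There is essentially no obstacle: the argument is a purely formal consequence of Theorem~\ref{thm:Zh} and the Northcott-type characterization of preperiodic points as height-zero points. The only point requiring minor care is the observation that removing finitely many points from an infinite set of preperiodic points leaves infinitely many preperiodic points, which is what allows the essential minimum to vanish.
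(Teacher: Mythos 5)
Your proof is correct and is essentially the argument the paper intends (the corollary is stated without a separate proof, but the same reasoning appears explicitly in the proof of Proposition~\ref{prop:Green_harmonic}): infinitely many preperiodic points force $\essmin_C(h_f)\le 0$, the left-hand inequality of Theorem~\ref{thm:Zh} then gives $h_f(\overline{C})\le 0$, and non-negativity of $h_f(\overline{C})$ from~\eqref{eq:01} yields equality.
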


\subsection{A first characterization of special curves}
\begin{prop}\label{prop:Green_harmonic}
Suppose that $C \subset \nA_\nk^2$ is an irreducible algebraic curve
containing a sequence of distinct points $p_n \in C(\nk^{\alg})$ such that 
$h_f(p_n) \to 0$. 

Then all points in $\ol{C} \cap L_\infty$ are preperiodic for $\ol{f}$, and for any $v\in M_\nk$ 
the function $g_v|_{C^{\an}_v}$ is harmonic on $\{g_v >0\}$.
\end{prop}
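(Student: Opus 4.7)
The plan is to combine Zhang's inequality with the explicit height decomposition~\eqref{eq:01}, exploiting the fact that every single quantity appearing on the right-hand side is non-negative.

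First I would translate the hypothesis into a statement about the essential minimum. Since the $p_n\in C(\nk^{\alg})$ are pairwise distinct, any finite subset $F\subset \overline{C}(\nk^{\alg})$ excludes only finitely many of them, hence $\essmin_C(h_f)\le \liminf_n h_f(p_n)=0$. As $h_f\ge 0$ on $\nP^2(\nk^{\alg})$, this forces $\essmin_C(h_f)=0$, and the left-hand inequality in Theorem~\ref{thm:Zh} then yields $h_f(\overline{C})=0$.

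Next I would plug this into formula~\eqref{eq:01}:
\[
0 \;=\; h_f(\overline{C}) \;=\; \sum_{p\in \overline{C}\cap L_\infty} (\overline{C},L_\infty)_p\, h_f(p) \;+\; \sum_{v\in M_\nk} \int_{C^{\an}_v} g_v\, d\mu_v.
\]
Every intersection multiplicity is positive and $h_f$ takes non-negative values; similarly, each $g_v\ge 0$ is integrated against the positive measure $\mu_v$ (Proposition~\ref{prop:green} and the construction of $\mu_{C,v}$ recalled in the previous subsection). Consequently each summand must vanish separately: we get $h_f(p)=0$ for every $p\in \overline{C}\cap L_\infty$, which by the zero-set characterization of $h_f$ means exactly that such $p$ is preperiodic under $\overline{f}$, and we get $\int_{C^{\an}_v} g_v\, d\mu_v=0$ for each place $v$.

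Finally, fix $v\in M_\nk$ and set $U_v := \{g_v>0\}\cap C^{\an}_v$, which is open because $g_v$ is continuous. The vanishing $\int_{C^{\an}_v} g_v\, d\mu_v=0$ of the integral of a non-negative continuous function against a positive Radon measure forces $\mu_v(U_v)=0$. Since $\mu_v=\Delta(g_v|_{C^{\an}_v})$ on $C^{\an}_v$ (as a distribution in the Archimedean case, as Thuillier's Laplacian in the non-Archimedean case), we deduce $\Delta g_v=0$ on $U_v$, i.e.\ $g_v|_{C^{\an}_v}$ is harmonic on $\{g_v>0\}$. No step presents a genuine obstacle, the only mild care being the interpretation of the Laplace operator at non-Archimedean places, which is directly provided by the formalism of~\cite{thuillier:phdthesis} recalled above.
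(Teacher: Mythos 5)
Your proof is correct and follows the same route as the paper's: Zhang's inequality forces $h_f(\overline C)=0$, and then the decomposition~\eqref{eq:01} into manifestly non-negative terms gives both the vanishing of $h_f$ at the points of $\overline C\cap L_\infty$ (hence their preperiodicity) and the vanishing of $\int_{C^{\an}_v} g_v\,d\mu_v$, which yields harmonicity on $\{g_v>0\}$. You have simply spelled out the positivity/measure-theoretic steps that the paper leaves implicit ("the result follows from~\eqref{eq:01}\dots").
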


\begin{proof}
Note that $h_f(\overline{C}) \ge0$.   
{The existence of infinitely many preperiodic points on $C$  implies  that $\essmin_C(h_f) \le 0$, 
therefore from Theorem~\ref{thm:Zh} we get that 
$h_f(\overline{C}) =0$.  Since a point $p$ on $L_\infty$  is $f$-preperiodic if and only if $h_f(p)=0$, it follows from~\eqref{eq:01}
that $\int_{C^{\an}_v}  g_v d \mu_v$ for every $v\in   M_\nk$. In particular 
$g_v =0$ on the support of $\mu_v=\Delta g_v$ (recall that $g_v$ is continuous), so
 $\Delta g_v=0$ on $\{g_v>0\}$. In other words $g_v$ is harmonic there, and the result follows.} 
\end{proof}


\section{Super-stable manifolds and local estimates}\label{sec:super-stable}

\subsection{Construction of super-stable manifolds}
In this section we work under the following hypothesis:
$(\nk,|\cdot|)$
 is a complete metrized field of characteristic $0$ (which may be either Archimedean 
or non-Archimedean).  

\begin{thm}\label{thm:super-stable_manifold}
Suppose $f\colon (\nA_\nk^2, 0) \to (\nA_\nk^2,0)$ is a germ of analytic map fixing the origin of the   form
\begin{equation}\label{eq:form_f_1}
f(x,y) =  \left(\lambda x  +\mu y + g(x,y) , y^d(1+h(x,y))\right) ~,
\end{equation}
where $d\ge 2$, $\lambda\neq 0$, $\mu \in \nk$, $h(0,0)=0$, and $g(x,y) = \mathcal{O}(|(x,y)|^2)$.
Then there exists a unique smooth analytic curve which is transverse to $\{y=0\}$
and $f$-invariant.
\end{thm}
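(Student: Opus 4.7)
The plan is to realize the invariant curve as the graph $\set{x = \varphi(y)}$ of an analytic function $\varphi$ vanishing to order $2$ at $0$, to recast $f$-invariance as a fixed-point equation $\varphi = T\varphi$ in a suitable Banach space, and to deduce uniqueness from the uniqueness of a formal power-series solution.

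\emph{Normal form.} First, the linear change of coordinates $(\tilde x, y) \mapsto (\tilde x - \tfrac{\mu}{\lambda}y,\, y)$ conjugates $f$ to a germ of the same shape~\eqref{eq:form_f_1} but with $\mu = 0$, while preserving $\set{y = 0}$; so we may assume $\mu = 0$. A smooth analytic curve through $0$ transverse to $\set{y=0}$ is locally of the form $\set{x = \varphi(y)}$ with $\varphi(0) = 0$, and $f$-invariance is equivalent to
\[
 \lambda\varphi(y) + g(\varphi(y), y) = \varphi\bigl(y^d(1 + h(\varphi(y), y))\bigr),
\]
which we rearrange as $\varphi = T\varphi$ with
\[
 T\varphi(y) := \tfrac{1}{\lambda}\Big[\varphi\bigl(y^d(1 + h(\varphi(y), y))\bigr) - g(\varphi(y), y)\Big].
\]

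\emph{Formal, then analytic solution.} A formal power series $\varphi = \sum_{n \geq 2} a_n y^n$ solves the equation uniquely: the linear order forces $a_1 = 0$, and for $n \geq 2$ the coefficient $a_n$ appears on the right-hand side only through $\lambda a_n$, since $\varphi(y^d(1+h))$ at order $n$ involves only $a_m$ with $m \leq n/d < n$ (because $d \geq 2$) and the hypothesis $g = O(|(x,y)|^2)$ prevents $a_n$ from arising via $g(\varphi(y), y)$. Hence an explicit polynomial recursion with denominator $\lambda$ determines all $a_n$. To promote this to a convergent solution, introduce the closed ball
\[
 \mathcal{E}_{r,M} := \Big\{\varphi \text{ analytic on } D_r,\ \varphi(0)=\varphi'(0)=0,\ \|\varphi\|_r \leq M\Big\}, \qquad \|\varphi\|_r := \sup_{|y|\leq r} \frac{|\varphi(y)|}{|y|^2}.
\]
For $M$ larger than a constant depending on $|\lambda|$ and the $C^2$-size of $g$, and for $r$ small enough, $T$ sends $\mathcal{E}_{r,M}$ into itself and is a contraction: using $|\varphi(y)| \leq Mr|y| \leq |y|$ one gets $|g(\varphi(y), y)| \leq C|y|^2$, while $|Y(y)| \leq 2|y|^d$ yields $|\varphi(Y(y))| \leq 4Mr^{2d-2}|y|^2$, and analogous estimates combined with Cauchy bounds on $\varphi_1'$ give the contraction factor, which tends to $0$ as $r \to 0$. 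Banach's theorem then produces a unique fixed point $\varphi^\star$, hence the desired invariant analytic curve.

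\emph{Uniqueness and principal difficulty.} Any $f$-invariant smooth analytic curve transverse to $\set{y=0}$ is locally of the form $\set{x = \varphi(y)}$ for an analytic solution $\varphi$ of the functional equation; its Taylor expansion must agree with the one determined in the formal step, hence the germ coincides with $\varphi^\star$. The main technical issue is calibrating the norm and the radius $r$ so that the contraction argument works uniformly in $\lambda \neq 0$---especially when $|\lambda| \leq 1$, where naive unweighted sup-norm estimates fail---and simultaneously in the Archimedean and non-Archimedean settings. The weighted norm $|\varphi(y)|/|y|^2$ is essential: the super-stable features (the second coordinate $y^d$ with $d \geq 2$ and $g$ vanishing to order $2$) force both nonlinear contributions to $T\varphi$ to be of size $|y|^2$ with an arbitrarily small multiplicative constant once $r$ is small, regardless of the modulus of $\lambda$.
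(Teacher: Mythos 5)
Your proof follows essentially the same route as the paper: normalize $\mu=0$, write the candidate invariant curve as a graph $x=\varphi(y)$, recast invariance as the fixed-point equation $\varphi = T\varphi$ for the graph-transform operator, and apply Banach's theorem; the formal power-series recursion you spell out is a useful supplement that makes the uniqueness assertion transparent. One point in your closing discussion is off, though: the weighted norm $|\varphi(y)|/|y|^2$ is not essential, and ``naive'' sup-norm estimates do not in fact fail when $|\lambda|\le 1$. The paper works on the Banach space of analytic functions vanishing at $0$ (so $\varphi=\sum_{j\ge1}a_jy^j$) with the plain sup norm $\|\varphi\|_r$, and obtains the needed smallness from the Schwarz lemma $|\varphi(y)|\le(\|\varphi\|_r/r)|y|$ in the Archimedean case (resp.\ the coefficient bound $\sup_j|a_j|r^{dj}\le r\|\varphi\|_r$ in the non-Archimedean case); after composing with $y\mapsto y^d(1+h)$ this yields a factor $O(r^{d-1})$, so the Lipschitz constant of $T$ is $\frac{1}{|\lambda|}(3r^d+C'r)$, which tends to $0$ as $r\to 0$ for any fixed $\lambda\neq 0$. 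Shrinking $r$ suffices; no weight and no order-two vanishing is required --- the operative normalization is simply $\varphi(0)=0$, which is automatic for a curve through the origin. Your weighted-norm scheme is correct but heavier than the problem requires, and your suggestion that the sup norm breaks down for $|\lambda|\le 1$ is not borne out.
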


We shall call this curve the \emph{local super-stable manifold} of
the origin, and denote it by  $W^{\rm ss}_\loc(0)$. 
After a linear change of coordinates of the form $(x,y)\mapsto (x+\frac{\mu}{\lambda}y, y)$, 
we may and will assume from now on that $\mu = 0$. 
  Expressing the invariant  curve as a graph of the form 
$x=\varphi(y)$ and  making a change of coordinates of the form 
$(x,y)\mapsto (x -\varphi(y), y)$, $f$ takes the form 
\begin{equation}\label{eq:reduced_form}
f(x,y) =  (\lambda x + x\tilde g(x,y) , y^d(1+\tilde{h}(x,y))).
\end{equation}
It follows that $f$ is analytically conjugate to $y \mapsto y^d$ on  $W^{\rm ss}_\loc(0)$, hence the 
terminology.

The  result is  classical when $\nk$ is  Archimedean and/or $f$ is locally invertible
(see e.g.~\cite[Appendix]{herman-yoccoz:smalldivisornonarchi}).  
For convenience we include a proof that works  simultaneously
in the Archimedean and non-Archimedean settings, and is adapted to  $\set{y=0}$ being superattracting.

{Let us point out that  in the next section, 
we will apply this result to a polynomial mapping of $\C^2$ at a point on the line
at infinity, with   $y=0$ corresponding  to $L_\infty$, so that  $f$ will  not be polynomial in the coordinates $(x,y)$.}

\begin{proof}
As explained above, we look for an analytic map 
$y \mapsto \varphi(y)$, with $\varphi(0)=0$
such that 
the change of coordinates $\Phi(x,y) = (x + \varphi(y), y)$
satisfies
\[
\Phi^{-1}\circ f \circ \Phi (x,y) = (\lambda x + x \tilde{g}(x,y), y^d(1+\tilde{h}(x,y)))\]
with $\tilde{g}, \tilde{h}$ analytic and vanishing at $0$. 
This property is equivalent to the identities:
\[
\begin{cases}
\lambda \varphi(y)+ g(\varphi(y),y) = \varphi(y^d (1+ \tilde{h}(0,y)))
\\
\tilde{h}(0,y) = h(\varphi(y),y)
\end{cases}
\]
so that we aim at finding some analytic function $\varphi$ satisfying
\[\lambda \varphi(y)+ g(\varphi(y),y) = \varphi(y^d (1+ h(\varphi(y),y)))
~.\]
For any $r >0$, let us introduce the Banach space $\mathcal{B}_r$
that consist of those power series $\varphi(y) := \sum_{j\ge 1} a_j y^j$ which are convergent in the
disk of radius $r$, and satisfy 
$\norm{\varphi}_r := \sup_{|y|<r}  |\varphi(y)| < \infty$.
Note that in the non-Archimedean case, we have $\norm{\varphi}_r := \sup_j  |a_j| r^j$.

For any $\varphi \in \mathcal{B}_r$, we set
\[T\varphi (y) := \frac{1}{\lambda} \left( \varphi(y^d (1+ h(\varphi(y),y))) - g(\varphi(y),y) \right)~.\]
We claim that for $r>0$ and $\rho>0$ sufficiently small, 
$T$ is a well-defined strictly contracting map on $B(0,\rho)\subset \mathcal{B}_r$. 
Then, applying the   Banach fixed point theorem implies the existence of the desired $\varphi$. 

First, we may suppose that $g$ is analytic in the polydisk of radius $r$, and since $g$ vanishes up to order $2$ at the origin, we have 
\[ \abs{g(x,y)} \le C \max \{ |x|, |y|\}^2\]
for some $C>0$ and all $|x|, |y| < r$. Similarly, we may suppose that $h$  is analytic in the polydisk of radius $r>0$, 
and that $|h(x,y)| \le \frac12$ for all $|x|, |y| < r$.

Pick any  $\varphi(y) = \sum_j a_j y^j \in B(0,\rho)\subset \mathcal{B}_r$.
Reduce $r>0$ if necessary so that $\frac32 r^d <r$.
Then $\tilde{\varphi} :y\mapsto  \varphi (y^d (1+h(\varphi(y),y)))$
is well-defined and analytic on the disk of radius $r$.

In the non-Archimedean case, 
$|1+h(\varphi(y),y))| = 1$, so that 
one has
\[
\norm{\tilde{\varphi}(y)}_r = \sup_{j\ge1} \abs{a_j} r^{dj} \le r \sup_{j\ge1} \abs{a_j} r^{dj-1} \le r |\varphi|_r~.\]
In the Archimedean case, the Schwarz lemma yields
$|\varphi(y)| \le \frac{|\varphi|_r}r |y|$ for all $|y| <r$, hence 
$\norm{\tilde{\varphi}}_r \le \frac32 r^{d-1} |\varphi|_r $.
Note that we also have:
\[\norm{g(\varphi(y),y)}_r \le C \max \{\norm{\varphi}_r, r\}^2  \]
so for $\varphi\in B(0, \rho) $ we deduce  
\[|T\varphi|_r \le \frac1{\lambda}  \lrpar{\frac32 r^{d-1} \rho + C \max\set{\rho, r}^2}.\] 
By choosing $\rho = r$ and then $r$ small enough, this estimate
shows that $T\varphi$ is well-defined on the ball  $B(0,r)\subset \mathcal{B}_r$ and
$T(B(0,r)) \subset B(0,r)$. 

In order to prove that $T$ is strictly contracting, observe that
we can write  
\[
g(x,y) - g(x',y) = (x-x') \hat{g}(x,x',y)
\]
where $\hat{g}(x,x',y)$ is again analytic in the polydisk of radius $r$, and
\[
| \hat{g}(x,x',y)| \le C' \max \{|x|, |x'|, |y|\}
\]
for some constant $C'>0$.
For any pair of  analytic functions $\varphi_1, \varphi_2 \in B(0,r)\subset \mathcal{B}_r$ we infer:
\[
|g(\varphi_1(y),y) - g(\varphi_2(y),y)|
\le 
C' \norm{\varphi_1 - \varphi_2}_r \max \{\norm{\varphi_1}_r, \norm{\varphi_2}_r ,r \}
\]
hence
\[
\norm{T\varphi_1- T\varphi_2}_r
\le 
\frac1{\lambda}
(3r^d + C' r )\, \norm{\varphi_1 - \varphi_2}_r ~.
\]
Again, by choosing $r$ sufficiently small, 
we obtain  that $T$ is strictly contracting and we are done.
\end{proof}

\subsection{The rescaling argument in the repelling case}
We work in $\mathbb{A}^2_\nk$ where $\nk$ is an arbitrary complete  metrized field of characteristic 0. 
We start with a preparation lemma. 
\begin{lem}\label{lem:preparation_saddle}
Suppose $f$ is an analytic map of the form 
\begin{equation}\label{eq:form_f_2}
f(x,y) =  \left(\lambda x + x g(x,y) , y^d(1+ h(x,y))\right)
\end{equation}
where $\abs{\lambda} >1$, 
$d\ge 2$ and $g(0)=h(0)=0$. 

Then there exists an analytic  change of coordinates $\Phi$ such that
\[
\Phi^{-1} \circ f \circ \Phi (x,y)=
 (\lambda x (1 + xy \tilde{g}(x,y)) , y^d(1+x \tilde{h}(x,y)))
\]
for some analytic functions $\tilde{g},\tilde{h}$.
\end{lem}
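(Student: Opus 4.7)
My plan is to construct $\Phi$ in two stages. First I linearize $f$ separately along each of its two invariant coordinate axes; then I perform a secondary conjugation to kill the remaining obstructions of the form $xy^j$ in the first component. The form~\eqref{eq:form_f_2} makes both axes $f$-invariant: $\{y=0\}$ because the second component is divisible by $y$, and $\{x=0\}$ because the nonlinear part $xg$ of the first component carries an $x$. On $\{y=0\}$ the restricted dynamics $x\mapsto \lambda x + xg(x,0)$ has multiplier $|\lambda|>1$ and is thus linearizable to $x\mapsto \lambda x$ by Koenigs; on $\{x=0\}$ the restricted dynamics $y\mapsto y^d(1+h(0,y))$ is super-attracting of local degree $d\ge 2$ and is linearizable to $y\mapsto y^d$ by B\"ottcher. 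Both linearizations are produced by a Banach fixed-point argument of the same type as (in fact simpler than) the one used in Theorem~\ref{thm:super-stable_manifold}, valid uniformly in the Archimedean and non-Archimedean settings over a complete field of characteristic zero. Conjugating by $\Phi_1(x,y)=(\phi(x),\psi(y))$ brings $f$ to the form
\[
\tilde f(x,y) = \bigl(\lambda x + xy\,A(x,y),\ y^d(1+xB(x,y))\bigr),
\]
the factor $xy$ in the first correction being forced by the joint invariance of $\{x=0\}$ and by the linearity of $\tilde f|_{\{y=0\}}$, and similarly for the $x$ factor in the second.

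Next I would seek a secondary change of coordinates of the form $\Phi_2(x,y)=(x(1+yv(y)),y)$ with $v$ analytic near $0$. Such a $\Phi_2$ preserves both axes and restricts to the identity on $\{y=0\}$; in particular it preserves the linearizations above and the form of the second component. A direct computation of $\partial_X F_1(0,y)$ for the conjugate $F=\Phi_2^{-1}\circ\tilde f\circ\Phi_2$ shows that the requirement $\partial_X F_1(0,y)\equiv \lambda$---which is exactly what is needed to upgrade $xy\,A$ to $x^2y\,\tilde g$ in the first component---reduces, after dividing through by $y$ and setting $\alpha(y):=A(0,y)$, to the functional equation
\[
v(y) \;=\; \frac{y^{d-1}v(y^d) - \alpha(y)/\lambda}{1+y\alpha(y)/\lambda}.
\]
I would solve this by a Banach fixed-point argument in the space $\mathcal{B}_r$ of bounded analytic functions on $\{|y|<r\}$, exactly as in the proof of Theorem~\ref{thm:super-stable_manifold}: for $r$ small the denominator is a unit because $|\lambda|>1$, and the operator $v\mapsto y^{d-1}v(y^d)$ is strictly contracting because $d\ge 2$. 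Setting $\Phi := \Phi_1\circ\Phi_2$ then gives the desired normalization.

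The main technical obstacle is the fixed-point argument in the second stage, since it is there that the two structural hypotheses have to interact: the condition $|\lambda|>1$ makes the denominator invertible on a small disk, and the condition $d\ge 2$ supplies the contraction factor $y^{d-1}$. Once the estimates are set up, the argument transfers verbatim between the Archimedean and non-Archimedean cases, following the template of Theorem~\ref{thm:super-stable_manifold}.
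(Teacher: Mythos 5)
Your proof is correct and follows essentially the same route as the paper's: first B\"ottcher along $\{x=0\}$ and Koenigs along $\{y=0\}$ (both valid Archimedean and non-Archimedean by fixed-point arguments), then a secondary conjugation of the form $(x,y)\mapsto(x(1+\varphi(y)),y)$ to kill the $xy\cdot(\text{function of }y)$ term in the first component. The only cosmetic difference is in the last step: you solve the resulting functional equation for $\varphi(y)=yv(y)$ by a contraction argument, while the paper writes down the closed-form solution $1+\varphi(y)=\prod_{k\ge0}(1+g_1(y^{d^k}))^{-1}$ to the equivalent equation $(1+\varphi(y))(1+g_1(y))=1+\varphi(y^d)$; both are fine.
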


Recall that the form~\eqref{eq:form_f_2} is what is obtained from~\eqref{eq:form_f_1} after conjugating to get $\mu = 0$ and 
declaring that the stable manifold of Theorem~\ref{thm:super-stable_manifold} is $\set{x=0}$. 
  
\begin{proof}
By B\"ottcher's theorem (see~\cite[Chapter 4]{benedetto:dyn1NA} for the non-Archimedean case)
applied to $y \mapsto f(0,y)$ we may suppose that $x$ divides $h$.
Similarly, since $|\lambda|>1$, by  
we may linearize $x \mapsto f(x,0)$, and suppose that $f$ is of the form
$f(x,y) =  (\lambda x (1+ g_1(y))+ x^2y h_1(x,y) , y^d (1+\mathcal{O}(x)))$ for some analytic functions $g_1,h_1$ with $g_1(0)=0$. 
 
We claim that there exists $\Phi(x,y) = (x(1+ \varphi(y)), y)$
with $\varphi(0)=0$ such that 
$$\Phi\inv \circ f\circ \Phi(x,y) =  (\lambda x + x^2y h_2(x,y) , y^d(1+\mathcal{O}(x)))$$ 
for some analytic function $h_2$.
Indeed, $\varphi$ is then characterized by the equation 
\[ \lambda x(1+ \varphi(y))(1+g_1(y)) + \mathcal{O}(x^2)
=
(\lambda x + \mathcal{O}(x^2)) (1+ \varphi(y^d))
\]
that is, $(1+ \varphi(y))(1+g_1(y))= (1+ \varphi(y^d))$, a solution of which is given by the infinite product
\[
1+ \varphi(y) = \prod_{k=0}^\infty \left(1+g_1\left(y^{d^k}\right)\right)^{-1}
\] and we are done.
\end{proof}

The next result is similar to \cite[Lemma 4.2]{DMM-henon}.
 
\begin{prop}\label{prop:renormalization}
Suppose $f$ is an analytic map of the form 
\begin{equation}\label{eq:form_f_4}
f(x,y) =  (\lambda x (1+ xy g(x,y)) , y^d(1+ x h(x,y)))
\end{equation}
where $|\lambda| >1$, 
$d\ge 2$ and $g,h$ are analytic functions.

Then $f^n(\frac{x}{\lambda^n} , y) \to(x,0)$ when $n\to\infty$, 
uniformly on a polydisk of sufficiently small radius
centered at the origin.
\end{prop}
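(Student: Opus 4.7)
The plan is to introduce the rescaled iterates $\tilde{a}_k := \lambda^{n-k} a_k$, where $(a_k, b_k) := f^k(x/\lambda^n, y)$, so that $\tilde{a}_0 = x$ and the desired conclusion becomes $\tilde{a}_n \to x$ together with $b_n \to 0$. From the explicit form~\eqref{eq:form_f_4} one derives the coupled recursion
\[
\tilde{a}_{k+1} = \tilde{a}_k \bigl(1 + a_k b_k g(a_k,b_k)\bigr), \qquad b_{k+1} = b_k^d \bigl(1 + a_k h(a_k,b_k)\bigr),
\]
which cleanly separates the expanding direction (rescaled away in $\tilde{a}$) from the superattracting one.

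First I would fix $r>0$ small enough that $g$ and $h$ are analytic and bounded by some constant $M$ on the polydisk of radius $r$. The second recursion then gives $|b_{k+1}| \le 2|b_k|^d$ (with equality $|b_{k+1}| = |b_k|^d$ in the non-Archimedean case), and setting $c_k := 2^{1/(d-1)}|b_k|$ turns this into $c_{k+1} \le c_k^d$, yielding a double-exponential decay $|b_k| \le C \beta^{d^k}$ for some $\beta < 1$, uniformly over initial data in a sufficiently small polydisk of radius $r' \le r$.

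The heart of the argument is then a telescoping estimate for $\tilde{a}$. Bootstrapping the inductive bound $|\tilde{a}_k| \le 2|x|$, which forces $|a_k| \le 2|x|/|\lambda|^{n-k}$, one obtains
\[
|\tilde{a}_n - x| \;\le\; \sum_{k=0}^{n-1} |\tilde{a}_{k+1} - \tilde{a}_k| \;\le\; 4M|x|^2 \sum_{k=0}^{n-1} \frac{|b_k|}{|\lambda|^{n-k}}.
\]
The superfast decay of $|b_k|$ easily beats the geometric growth $|\lambda|^k$, making the weighted sum both uniformly bounded and $O(|\lambda|^{-n})$ as $n \to \infty$. After shrinking $r'$ if needed so that the total increment keeps $|\tilde{a}_k|$ strictly below $2|x|$, this closes the induction and gives the uniform convergence $\tilde{a}_n \to x$. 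Combined with the already established $b_n \to 0$, this is exactly the content of the proposition.

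The delicate point to watch is the mixing of the two scales: $|a_k|$ \emph{grows} geometrically from order $|x|/|\lambda|^n$ up to order $|x|$ as $k$ runs from $0$ to $n$, while $|b_k|$ \emph{decays} superfast; only the product $|a_k||b_k|$ governs the correction to $\tilde{a}_k$, and it must be summable with a uniform bound that tends to $0$ with $n$. This is what makes the choice of weights $|\lambda|^{-(n-k)}$ in the telescoping sum the natural one, and where the book-keeping has to be done with some care in order to close the induction uniformly on the initial polydisk.
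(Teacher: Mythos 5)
Your proof is correct and follows essentially the same route as the paper: both first establish the super-exponential decay $|b_k| \lesssim \beta^{d^k}$, then track the rescaled first coordinate along the orbit, and conclude from the fact that the weighted sum $\sum_k |\lambda|^{-(n-k)}|b_k| = O(|\lambda|^{-n})$ since the double-exponential decay of $|b_k|$ crushes the geometric factor $|\lambda|^k$. The only cosmetic difference is that you estimate $|\tilde{a}_n - x|$ by a telescoping sum of increments, whereas the paper factors $x_n = x \prod_{j<n}(1 + x_j y_j g(x_j,y_j))$ and applies the inequality $\bigl|\prod(1+z_j)-1\bigr| \le \exp\bigl(\sum|z_j|\bigr)-1$; both hinge on the same bootstrap bound $|a_k| \lesssim |x|\,|\lambda|^{k-n}$ keeping the orbit in the polydisk.
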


\begin{proof}
Fix $0<r\leq 1/4$ small enough so that $g, h$ are both analytic on the polydisk of radius $r$ and
$|g (x,y) |, |h (x,y) | \le 1$ for   $|x|, |y| <r$. 
Let us first show that   if $\abs{x}\leq \frac{r}{2\abs{\lambda}^n}$ and $\abs{y}\leq r$, 
then the $n$ first iterates of $(x,y)$ remain in $\D_r^2$. We argue by induction. So assume that $(x_0,y_0)\in \D_{r\abs{\lambda}^{-n}/2}\times \D_r$, 
put $f^j(x_0,y_0) = (x_j, y_j)$, let $k\leq n$  and assume that $(x_j, y_j)\in \D_r^2$ for $j\leq k-1$.  
Observe that for $j\leq k-1$, $\abs{y_{j+1}}\leq 2\abs{y_j}^d$ from which it follows that  
$$\abs{y_k}\leq \lrpar{2^{1/(d-1)} \abs{y_0}}^{d^k} \leq   \lrpar{2^{1/(d-1)} r}^{d^k} \leq r\text.$$
Observe that  the first inequality together with 
 $r\leq 1/4$ also yield $\abs{y_j}\leq 2^{-d^j}$. 
For the first coordinate, we use recursively the relation 
$x_{j+1} = \lambda x_j (1+x_j y_j g(x_j, y_j))$ to get 
\begin{equation}\label{eq:xk}
\abs{x_k} \leq \abs{\lambda}^k \abs{x_0}\prod_{j=0}^{k-1} \lrpar{1+ \abs{x_j} \abs{y_j}} \leq 
\frac{r}{2} \abs{\lambda}^{k-n} \prod_{j=0}^{k-1} \lrpar{1+  2^{-d^j}} \leq r\abs{\lambda}^{k-n} ,
\end{equation}
where the last inequality follows from 
$$ \prod_{j=0}^{k-1} \lrpar{1+  2^{-d^j}} \leq \prod_{j=0}^{k-1} \lrpar{1+  2^{-2^j}} = \frac{2^{2^k}-1}{2^{2^k-1}} < 2,$$
in which the middle equality is easily obtained by induction.

Now take   $(x,y) \in \D_{r/2}$, and consider 
$f^n(\frac{x}{\lambda^n} , y)$. Denote as before $(x_0 , y_0)  = (\frac{x}{\lambda^n} , y)$ and 
$(x_j, y_j) = f^j(x_0, y_0)$.
The first part of the proof shows that $(x_j, y_j)$
is well-defined for all $j\leq n$, and that $y_n\to 0$.  Now we have 
$$x_n =  \lambda^n x_0 \prod_{j=0}^{n-1} \lrpar{1+x_jy_j h(x_j, y_j)} = x 
\prod_{j=0}^{n-1} \lrpar{1+x_jy_j h(x_j, y_j)}. $$
The inequality 
$\big|\prod (1+z_j) - 1\big| \leq \exp \lrpar{\sum \abs{z_j}} - 1$ shows that to establish the convergence $x_n\to x$ it is enough to show that $\sum_{j=0}^{n-1} \abs{x_jy_j h(x_j, y_j)}$ tends to 0.
But by~\eqref{eq:xk}, $\abs{x_j}\leq r \abs{\lambda}^{j-n}$, thus 
$$\sum_{j=0}^{n-1} \abs{x_j y_j h(x_j, y_j)} \leq \sum_{j=0}^{n-1} r \abs{\lambda}^{j-n} 2^{-d^j} \leq r\abs{\lambda}^{-n} \sum_{j=0}^\infty \abs{\lambda}^{j} 2^{-d^j},$$ and we are done.
\end{proof}

\subsection{Graph transform   for $\lambda=1$}
In this paragraph we assume that $\nk = \C$ and $f$ is of the form 
\begin{equation}\label{eq:f_parabolic} 
f(x,y) =  \left(x + g(x,y) , y^d(1+h(x,y))\right)~,
\end{equation}
with $g(x,y) = \mathcal{O}(|x,y|^2)$, $h(0,0) = 0$, and $g(x,0) = c x^{k+1} + \mathcal{O}(x^{k+2})$
 for some $k\geq 1$ and $c\neq 0$. Observe that $f\rest{\set{y=0}}$ has a parabolic point at the origin with $k$ attracting and $k$ repelling petals  (see e.g.~\cite[\S 6.5]{beardon:iterratfunction}).
An \emph{attracting petal} is a $f$-invariant (connected and simply-connected) open subset $U$ containing $0$ in its boundary, and such that, for all $z \in U$, $f^n(z) \to 0$
tangentially to some real direction (in our case, to the normalized $k$-th roots of $-c$). A \emph{repelling petal} is an attracting petal for $f^{-1}$ (they are tangent to the normalized $k$-th roots of $c$). One can chose the $k$ attracting petals and $k$ repelling petals so that their union fills up a punctured neighborhood of the origin.

A vertical graph $V$ in a domain of the form $\Omega\times \D_\rho$
 is a  
 submanifold of the 
form $V:= \set{(\varphi(y), y), \ y\in  \D_\rho}$ for some holomorphic function 
$\varphi : \D_\rho \to \Omega$.
In the next theorem we consider 
pull backs of such graphs in $\D_r^2$ in 
 the graph transform sense, that is, 
when pulling back some vertical graph under $f$, we keep only the component of $f\inv (V)\cap \D_r^2$
containing $f\inv(V\cap \set{y=0})$. Abusing notation we simply denote it by $f\inv (V)$. 

\begin{thm}\label{thm:graph_transform_parabolic} 
Let $f\colon(\C^2, 0) \to (\C^2, 0)$ 
  be of the form~\eqref{eq:f_parabolic} 
  with $h(0)=0$, $g(x,0)\not\equiv 0$ and $g(x,y) = \mathcal{O}(|(x,y)|^2)$. 
Let $U$ be any repelling petal of   
$f\rest{\set{y=0}}$ and consider a germ $V$ of analytic curve transverse 
to $\set{y=0}$, intersecting $U$.
 
Then there exists $r>0$  depending only on $f$, such that  for  large enough $n$, 
the analytic sets $f^{-n}(V)$ are   vertical graphs in $\D_r^2$  
converging  to $W^{\rm ss}_\loc(0)$ in the $C^1$ topology. \end{thm}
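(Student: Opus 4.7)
After applying Theorem~\ref{thm:super-stable_manifold} and translating the resulting super-stable manifold onto $\set{x=0}$ via a holomorphic change of coordinates, we may assume
\[
f(x,y) = \bigl(x + x\,\tilde g(x,y),\ y^d(1 + \tilde h(x,y))\bigr),
\]
with $\tilde g(0,0) = \tilde h(0,0) = 0$, $\tilde g(x,0) = c x^k + O(x^{k+1})$, and $W^{\rm ss}_\loc(0) = \set{x=0}$. Parametrize $V$ as a vertical graph $V = \set{(\varphi_0(\eta),\eta) : |\eta|<\rho}$, so that $x_0^* := \varphi_0(0) \in U$. Since the $\varphi_n$ to be constructed are holomorphic on a fixed disk $\D_r$, uniform $C^0$ convergence to $0$ automatically upgrades to $C^1$ convergence on every strictly smaller disk via Cauchy's estimates. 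The proof therefore reduces to the uniform $C^0$ bound $\varphi_n \to 0$ on $\D_r$.

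\emph{Inductive construction of the graphs.} The graph $V_{n+1}$ is cut out implicitly by
\[
x + x\,\tilde g(x,y) \;=\; \varphi_n\!\bigl(y^d(1+\tilde h(x,y))\bigr).
\]
At $y=0$ this specialises to $f|_{\set{y=0}}(x) = \varphi_n(0)$, whose distinguished preimage $(f|_{\set{y=0}})^{-1}(\varphi_n(0))$ lies in $U$; since $(f|_{\set{y=0}})'(x) = 1 + (k+1)cx^k + O(x^{k+1})$ is nonzero for $x$ near $0$, the holomorphic implicit function theorem yields $\varphi_{n+1}$ on some disk. The key technical point is that one may fix a common radius $r$ for all $n$: the super-attracting character of the $y$-direction keeps the argument $y^d(1+\tilde h(x,y))$ of $\varphi_n$ well inside its domain whenever $|y|<r$ for $r$ small, while the uniform non-degeneracy of the $x$-derivative along the whole preorbit of $x_0^*$ in $U$ gives uniform implicit-function constants.

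\emph{Uniform $C^0$ decay via Fatou coordinates.} By the Leau--Fatou flower theorem, the repelling petal $U$ of $f|_{\set{y=0}}$ is an attracting petal for $(f|_{\set{y=0}})^{-1}$, and carries a Fatou coordinate $\Phi : U \to \set{\Re z > c_0}$ conjugating $(f|_{\set{y=0}})^{-1}$ to the translation $z \mapsto z+1$, with $\Phi(x) \sim (kcx^k)^{-1}$ near $0$. Iterating, $\Phi(\varphi_n(0)) = \Phi(x_0^*) + n$, whence $|\varphi_n(0)| \asymp n^{-1/k} \to 0$. For a general $y \in \D_r$, the super-attracting $y$-dynamics forces the endpoint $y_0$ of the forward orbit landing on $V$ to satisfy $|y_0| \le 2|y|^{d^n}$, hence to be super-exponentially small; expanding $\varphi_0(y_0) = x_0^* + O(y_0)$ and propagating this perturbation through the implicit equation above in the spirit of Hakim's parabolic graph transform~\cite{hakim} yields $\varphi_n(y) - \varphi_n(0) = O(r^{d^n})$ uniformly in $y \in \D_r$. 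Combined with $\varphi_n(0) \to 0$, this gives the required uniform $C^0$ convergence $\varphi_n \to 0$.

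\emph{Main obstacle.} The principal difficulty is that $f|_{\set{y=0}}$ is parabolic rather than hyperbolic, so the graph transform loses the usual exponential contraction in the $x$-direction: along the preorbit the $x$-differential of $f$ equals $1 + O(n^{-1})$ at the $n$-th step, producing only a slow polynomial drift. This parabolic slowness must be offset by the brutally fast super-exponential contraction in the $y$-direction, and the whole argument lies in carefully balancing these two regimes. A secondary technical point is that $f$ is ramified to order $d$ along the super-stable manifold, so one must consistently select the branch of $f^{-1}$ following the distinguished orbit $\varphi_n(0) \in U$ — this is precisely the graph-transform convention built into the statement of the theorem.
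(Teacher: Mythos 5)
Your outline is conceptually aligned with the paper's proof (normalize, identify $W^{\rm ss}_\loc(0)=\set{x=0}$, graph transform, parabolic Fatou coordinate, Hakim), but the argument has a genuine gap, and the intermediate estimate at its heart is in fact false in general. You assert that ``propagating the perturbation through the implicit equation in the spirit of Hakim'' yields $\varphi_n(y)-\varphi_n(0)=\mathcal O(r^{d^n})$. This super-exponential bound does not hold once the two coordinates are coupled. Indeed, even after the paper's sharper normalization (Lemma~\ref{lem:preparation_parabolic}), $f(x,y)=(x+x^{k+1}+x^{2k+1}\tilde g(x,y),\ y^d(1+x\tilde h(x,y)))$, the function $\tilde g$ may depend on $y$ at first order. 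Passing to the Fatou-like coordinate $z=(kx^k)^{-1}$, the map becomes $f(z,y)=(z-1+z^{-1}a(z,y),\ y^d(1+z^{-1/k}b(z,y)))$ and the fixed-point equation defining $\varphi_n$ acquires, via $\partial_y a$, a fresh $y$-perturbation of size $\sim|\varphi_n|^{-1}\sim 1/n$ \emph{at each step}, independent of the super-attraction in $y$. Differentiating the implicit equation at $y=0$ gives $\varphi_n'(0)\sim -a_y(\varphi_n(0),0)/\varphi_n(0)\sim 1/n$ in the $z$-chart, hence a polynomial (not super-exponential) slope; back in $x$-coordinates this translates into $\varphi_n(y)-\varphi_n(0)=\mathcal O(n^{-1-1/k})$. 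The conclusion $\varphi_n\to 0$ in $C^1$ survives, because $\varphi_n(0)\to 0$ like $n^{-1/k}$, but your proof as written relies on a false bound and, more importantly, never actually carries out the propagation argument — which is precisely the technical substance of the result.

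The paper handles exactly this issue by a different decomposition: after Lemma~\ref{lem:preparation_parabolic} it changes variables to $z=(kx^k)^{-1}$, works in a sector $\Omega_R$ where $f^{-1}$ is essentially $z\mapsto z+1+\mathcal O(1/z)$, and proves a quantitative graph-transform lemma (Lemma~\ref{lem:vertest}): if a graph has slope $\le\sigma$ over $\D_\rho$ with $\sigma\rho$ small, then $f^{-1}$ of it is a graph over $\D_{\min((\rho/2)^{1/d},r)}$ with slope $\le 1/10$ and with $\Re$ increased by at least $9/10$. Iterating, the domain stabilizes at $\D_r$ after finitely many steps, and the uniform drift $\Re\varphi_n\ge\Re\varphi_0(0)+9n/10$ together with the uniform slope bound directly gives $C^1$ convergence of $\set{x=(k\varphi_n(y))^{-1/k}}$ to $\set{x=0}$ — without needing to show the slope decays at all. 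Your Fatou-coordinate observation $\varphi_n(0)\asymp n^{-1/k}$ is the $1$-dimensional shadow of this drift estimate, but the $2$-dimensional work (uniform implicit-function constants along the parabolic preorbit, control of the $y$-coupling, stabilization of the vertical radius) is what Lemma~\ref{lem:vertest} packages, and it cannot be left as ``in the spirit of Hakim''. A secondary issue: Cauchy estimates give $C^1$ convergence only on strictly smaller disks, so to obtain $C^1$ on $\D_r^2$ one must either start from a slightly larger disk or, as the paper does, control the derivative directly.
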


 \begin{lem}\label{lem:preparation_parabolic}
Suppose $f$ is a holomorphic map of the form~\eqref{eq:f_parabolic} as in Theorem~\ref{thm:graph_transform_parabolic}.
Then there exist an integer $k\ge1$, and an analytic change of coordinates $\Phi$ such that
\[
\Phi^{-1} \circ f \circ \Phi (x,y)=
 (x + x^{k+1}+ x^{2k+1} \tilde{g}(x,y), y^d(1+x \tilde{h}(x,y)))
\]
for some analytic functions $\tilde{g}, \tilde{h}$.
\end{lem}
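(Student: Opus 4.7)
The plan is to mimic the strategy of the proof of Lemma~\ref{lem:preparation_saddle}, replacing the Koenigs linearization used there by the classical $1$D parabolic normalization, and adding one extra step at the end to kill cross monomials in the first coordinate.

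First, since $\lambda=1\ne 0$, Theorem~\ref{thm:super-stable_manifold} applies to $f$ and produces an $f$-invariant analytic curve $W^{\rm ss}_{\rm loc}(0) = \{x=\varphi(y)\}$ with $\varphi(y) = O(y^2)$; straightening it via $(x,y)\mapsto (x-\varphi(y),y)$ (which is the identity on $\{y=0\}$ since $\varphi(0)=0$) makes $\{x=0\}$ $f$-invariant and preserves the restriction $f_1|_{\{y=0\}}$. Next, B\"ottcher's theorem applied to the one-variable superattracting germ $y\mapsto f_2(0,y)$ yields an analytic conjugation to $y\mapsto y^d$; lifting it to $(x,y)\mapsto(x,\phi^{-1}(y))$ preserves $\{x=0\}$ and $f_1|_{\{y=0\}}$, and ensures that $x$ divides the error term in the second coordinate. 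Finally, the $1$D parabolic germ $x\mapsto f_1(x,0) = x+cx^{k+1}+O(x^{k+2})$ can be brought to the form $x+x^{k+1}+O(x^{2k+1})$ by a linear rescaling followed by finitely many polynomial substitutions in $x$ alone, each solving an explicit non-resonant linear equation; lifting this to $(x,y)\mapsto(\phi_3(x),y)$ preserves the two previous normalizations.

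After these three reductions one may write
\[
f(x,y)=\bigl(x+x^{k+1}+r(x,y),\ y^d(1+x\tilde h(x,y))\bigr),
\]
with $r$ analytic, $r(x,0)=O(x^{2k+1})$ and $r(0,y)\equiv 0$. The remaining task is to find an analytic $\psi$ with $\psi(0,y)\equiv\psi(x,0)\equiv 0$ such that the final change of coordinates $\Phi_4(x,y)=(x+\psi(x,y),y)$ turns the first coordinate into $x+x^{k+1}+x^{2k+1}\tilde g(x,y)$. At the infinitesimal level this amounts to the cohomological equation $\psi\circ f-\psi\equiv r\pmod{x^{2k+1}}$; expanding in monomials $x^iy^j$ with $j\ge 1$ and $0\le i\le 2k$, the coefficient of $x^iy^j$ on the left-hand side is $-\beta_{ij}$ modulo higher-order corrections, because the superattracting factor $y\mapsto y^d$ with $d\ge 2$ sends $y^j$ to $y^{dj}$ and thus produces no resonance with the parabolic direction at any finite order. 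The system is therefore uniquely solvable order by order.

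The main obstacle is proving that the formal $\psi$ obtained this way is actually analytic on a small bidisc, rather than a mere formal series. I expect this to follow from a Banach fixed-point argument on a ball in a space of bounded holomorphic functions, in the same spirit as the proof of Theorem~\ref{thm:super-stable_manifold}: the strong contraction supplied by $y\mapsto y^d$ should absorb the loss coming from the parabolic direction and deliver an honest analytic solution. Once $\psi$ is constructed, composing the four changes of coordinates produces the required $\Phi$.
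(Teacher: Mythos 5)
Your first three reductions (straightening the super-stable curve from Theorem~\ref{thm:super-stable_manifold} so that $\{x=0\}$ is invariant, using B\"ottcher's theorem on $y\mapsto f(0,y)$ to make the second coordinate $y^d(1+x h)$, and normalizing the one-variable parabolic germ $f(x,0)$ to $x+x^{k+1}+\mathcal O(x^{2k+1})$) match the paper's proof step by step. The final step is where you diverge, and where your argument has a genuine gap. The paper does not attempt a single conjugacy $\Phi(x,y)=(x+\psi(x,y),y)$ with $\psi$ depending on both variables. Instead it applies a \emph{finite} chain of elementary conjugations $\Phi_n(x,y)=(x(1+\varphi_n(y)x^n),y)$ for $n=0,\dots,2k-1$, where at each stage the term of $x$-degree $n+1$ in the first coordinate, written $x^{n+1}g_n(y)$ with $g_n(0)=0$, is killed by solving the one-variable linear functional equation $\varphi_n(y)-\varphi_n(y^d)=-g_n(y)$, whose solution is the explicit, manifestly convergent series $\varphi_n(y)=-\sum_{m\ge0}g_n(y^{d^m})$. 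Convergence here is immediate (the $y^{d^m}$ collapse double-exponentially on a small disc), and there are only finitely many such steps.

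Your proposal, by contrast, collapses all of these into a single cohomological-type equation for a two-variable $\psi$ and asserts that a Banach fixed-point argument ``should'' yield analyticity, explicitly leaving that step unproved. That is the missing idea. Moreover the full conjugacy equation for $\Phi_4$ is nonlinear in $\psi$ (it involves $(x+\psi)^{k+1}$ and $r(x+\psi,y)$), so ``$\psi\circ f-\psi\equiv r$'' is only its linearization; one would have to control the nonlinear corrections as well. The clean way to close your gap is exactly the paper's device: expand $\psi(x,y)=\sum_{n=1}^{2k}x^{n}\varphi_{n-1}(y)$, observe that the resulting system in the $\varphi_n$ is triangular in the $x$-grading, and solve each equation $\varphi_n(y)-\varphi_n(y^d)=-g_n(y)$ by the explicit series above. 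Once you do that, your argument and the paper's coincide.
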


\begin{proof}
The proof is essentially contained in~\cite[Proposition 2.3]{hakim}. We provide the details for the sake of completeness. 
By Theorem~\ref{thm:super-stable_manifold} and by applying the Böttcher 
  theorem to $y\mapsto f(0,y)$ we may assume that both $g$ and $h$ are divisible by $x$, 
so that  we may write
\begin{equation}\label{eq:f_parabolic2} 
f(x,y) =  \left(x + x g(x,y) , y^d(1+x h(x,y))\right) ~,
\end{equation}
with $g(0,0)=0$.

By a local change of coordinates involving only  $x$, we can arrange so that 
$f(x, 0) = (x +  x^{k+1} + \mathcal{O}(x^{2k+1}), 0)$.  
Expand the first coordinate of $f$ in power series of $x$ as follows:
$$x \circ f(x,y) =  x (1+g_0(y)) + x^{k+1} (1+ g_k(y))+ \sum_{j\neq 0,k}^\infty x^{j+1}g_j(y),$$ with $g_j(0)  = 0$ for $0 \leq j \leq 2k-1$.  

We claim that  for all $n \leq 2k-1$, we can conjugate $f$ by a germ of invertible holomorphic  map such that $g_j \equiv 0$ for every $j\leq n$. Applied to  $n=2k-1$, this claim implies the proposition.

For $n=0$ this is done by a change of coordinates of the 
form $\Phi_0(x,y):= (x(1+\varphi_0(y)), y)$ such that 
$(1+g_0) (1+\varphi_0) = 1+\varphi_0(y^d)$. This equation can be solved exactly 
as  in Lemma~\ref{lem:preparation_saddle}. 

Now assume that $n>0$, and that the result has been achieved up to $j = n-1$.
Put $\Phi_n(x,y) = \big(x(1+ \varphi_n(y) x^{n}), y\big)$, so that $\Phi_n\inv(x,y) = \big(x(1 - \varphi_n(y)x^{n} + \mathcal{O}(x^{n+1})), y\big)$.
Depending on the position of $n$ and $k$, we obtain: 
\[
x \circ ( \Phi_n\inv  \circ f \circ \Phi_n)=
\begin{cases}
x + x^{n+1} \big(\varphi_n(y)+g_n(y) - \varphi_n(y^d)\big)+  \mathcal{O}(x^{n+2})
& \text{ if } n <k,
\\
x + x^{n+1} \big(\varphi_k(y)+1+g_n(y) - \varphi_n(y^d)\big) +  \mathcal{O}(x^{n+2})
& \text{ if }  n=k,
\\
x + x^{k+1} + x^{n+1} \big(\varphi_n(y)+g_n(y) - \varphi_n(y^d)\big) + \mathcal{O}(x^{n+2})
& \text{ if }  n>k.
\end{cases} 
\]
Therefore, to render the term in $x^{n+1}$ constant, it is 
enough to solve the equation $-g_n(y) = \varphi_n(y) - \varphi_n(y^d)$ which can be done by setting 
$\varphi_n(y) =  - \sum_{m=0}^\infty g_n\big(y^{d^m}\big)$. 
\end{proof}

\begin{proof}[Proof of Theorem~\ref{thm:graph_transform_parabolic}]
By the previous lemma, we may suppose that 
\[f(x,y) = \left(x+x^{k+1}+ x^{2k+1} g(x,y), y^d (1+ x h(x,y))\right)\] with $g$ and $h$ holomorphic near the origin. 
Note that 
$f\rest{y=0}$ has a repelling petal along the positive real axis. 
Fix $r>0$ such that
$f$ is holomorphic and injective on a neighborhood of $\overline\D_r^2$. 
 Reducing and rotating the petal if necessary, we may assume that 
\[U:=\set{x:\ \arg(x)\in \lrpar{-\frac{\pi}{4k}, \frac{\pi}{4k}} , \ \abs{x}<r}.\] 
The holomorphic map $ z =   \lrpar{kx^k}\inv$ is univalent on 
$U\times \D_r$, and  takes its values in 
 \[\Omega_R := \set{z: \ \arg(z)\in  \lrpar{-\frac{\pi}{4}, \frac{\pi}{4}}, \ \abs{z}>R},\] where 
 $R=(kr^k)^{-1}$.
  The expression of 
 $f$ in the coordinates $(z,y)$ is of the form
\begin{equation}\label{eq:f_parabolic3}
f(z,y)  = \lrpar{z-1+ \unsur{z} a(z,y),  y^d \lrpar{1+  \unsur{z^{1/k}} b(z,y)}}. 
\end{equation}
so that $f$ is now defined in $\Omega_R\times \D_r$. 
 Fix    $M>0$ such that 
 \begin{equation}\label{eq:estimates}
 \abs{a}, \ \abs{b}, \abs{\frac{\fr a}{\fr z}}, \abs{\frac{\fr a}{\fr y}}, \ \abs{\frac{\fr b}{\fr z}}, \ \abs{\frac{\fr b}{\fr y}}
 \leq M \text{ on }\Omega_R\times \D_r, 
 \end{equation} 
and reduce $r$ if necessary so that $r<\frac1{10 d}$ and $MR^{-1/k}\leq \frac{1}{100}$.

 For any $\rho<r$ and $\sigma >0$ we let 
 \[\cG(\rho, \sigma) =\{\varphi \colon \D_\rho \to \Omega_R \text{ holomorphic  s.t. } \sup_{\D_\rho} 
 \abs{\varphi'} \le \sigma\}~.\]
\begin{lem}\label{lem:vertest}
Suppose that $\sigma \rho < \frac1{100 d}$.  
 For any vertical graph $\Gamma$ 
determined by $\varphi\in \cG(\rho, \sigma)$, then $f\inv \Gamma$ is 
a vertical graph determined by a function   $\psi \in \cG(\rho_1, \frac1{10})$
where
$\rho_1  = \min\lrpar{ (\rho/2)^{1/d}, r}$ and  $\Re(\psi) \geq \Re(\varphi(0)) + 9/10$.
\end{lem}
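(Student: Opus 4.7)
My plan is a Banach fixed-point argument directly solving for the graph transform. A point $(z, y)$ lies in $f^{-1}\Gamma$, where $\Gamma$ is the vertical graph of $\varphi$, precisely when
$$z - 1 + \tfrac{1}{z}a(z,y) = \varphi\bigl(F_2(z,y)\bigr), \qquad F_2(z,y) := y^d\bigl(1 + z^{-1/k} b(z,y)\bigr).$$
Solving for $z = \psi(y)$ yields the fixed point equation $\psi = T\psi$, where
$$T\psi(y) := 1 + \varphi\bigl(F_2(\psi(y), y)\bigr) - \psi(y)^{-1} a(\psi(y), y).$$
I would apply the contraction principle to $T$ on the closed subset $\mathcal{S} \subset \cG(\rho_1, 1/10)$ consisting of functions $\psi$ with $\Re \psi \geq \Re\varphi(0) + 9/10$.

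First, to see that $T$ maps $\mathcal{S}$ into itself: from $\rho_1^d \leq \rho/2$ and $|z^{-1/k}b| \leq MR^{-1/k} \leq 1/100$ one gets $|F_2(\psi(y), y)| \leq 1.01\,\rho_1^d < \rho$, so $\varphi \circ F_2$ is defined on $\D_{\rho_1}$. Then $|\varphi(F_2) - \varphi(0)| \leq \sigma|F_2| \leq \sigma\rho \leq 1/(100d)$ and $|\psi^{-1}a| \leq M/R \leq 1/100$ together imply $|T\psi - (1 + \varphi(0))| < 1/50$. Since adding $1$ to any element of $\Omega_R$ strictly enlarges its modulus and strictly shrinks the absolute value of its argument, a short trigonometric computation using $R \geq 10$ shows that the subsequent perturbation of size $<1/50$ still lands inside $\Omega_R$, while giving $\Re T\psi \geq \Re\varphi(0) + 49/50$.

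Next I would control the derivative by the chain rule: $(T\psi)' = \varphi'(F_2)\cdot \bigl(\partial_z F_2 \cdot \psi' + \partial_y F_2\bigr) - \frac{d}{dy}\bigl(\psi^{-1}a\bigr)$. Using $|\varphi'| \leq \sigma$ together with $|\partial_z F_2| \leq |y|^d/50$ and $|\partial_y F_2| \leq 1.02\, d|y|^{d-1}$, and the bound $|\psi'| \leq 1/10$ inherited from $\mathcal{S}$, the dominant contribution is of order $d\sigma \rho_1^{d-1}$, which by the hypothesis $\sigma \rho \leq 1/(100d)$ combined with $\rho_1^d \leq \rho/2$ fits comfortably under $1/20$. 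The remaining term $\frac{d}{dy}(\psi^{-1}a)$ is $O(M/R) \ll 1/100$. Together this yields $|(T\psi)'| \leq 1/10$ on $\D_{\rho_1}$.

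Finally, parallel Lipschitz estimates show that $T$ is a contraction on $\mathcal{S}$ in the sup norm: the difference $T\psi_1 - T\psi_2$ is dominated by a term of order $\sigma \rho_1^{d-1}\cdot \sup|\psi_1 - \psi_2|$ coming from $\varphi \circ F_2$, plus a term of order $M/R\cdot \sup|\psi_1 - \psi_2|$ coming from $\psi^{-1}a$, both strictly less than $1$. Banach's theorem produces a unique $\psi \in \mathcal{S}$, whose graph coincides with the relevant component of $f^{-1}\Gamma$ in $\D_r^2$ (the correct component being singled out by the uniqueness of $\psi$ and the injectivity of $f$ on $\Omega_R \times \D_r$). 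The principal obstacle is the bookkeeping of constants: the factor $d$ in the hypothesis $\sigma\rho \leq 1/(100d)$ must exactly absorb the factor $d$ appearing in $\partial_y F_2 \sim dy^{d-1}$, and one must check that all perturbative estimates close up with the already-fixed thresholds $r < 1/(10d)$ and $MR^{-1/k} \leq 1/100$.
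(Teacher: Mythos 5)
Your approach sets up the operator $T\psi(y) = 1 + \varphi(F_2(\psi(y),y)) - \psi(y)^{-1}a(\psi(y),y)$ and runs a Banach fixed-point argument on the function space $\mathcal{S}\subset \cG(\rho_1,1/10)$. The paper's proof is a small technical variant of this: it fixes $y_0\in\D_{\rho_1}$, shows that $z\mapsto\ell(z,y_0)$ (your $T$ evaluated pointwise) is a contraction of $\Omega_R$ into a small disk around $\varphi(0)+1$, and then controls $\psi'$ by implicit differentiation. Both routes require the same numerical bounds, and your translation estimate $|T\psi-(1+\varphi(0))|<1/50$ matches the paper's. The function-space version is fine, but you should note explicitly that $\mathcal{S}$ with the sup metric is complete (needed for Banach): closedness of the constraint $\psi(\D_{\rho_1})\subset\Omega_R$ is not automatic because $\Omega_R$ is open; it is rescued by your bound on $|T\psi-(\varphi(0)+1)|$, which keeps the image uniformly inside $\Omega_R$.

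There is, however, a genuine gap in your derivative estimate. You assert that the dominant term $d\sigma\rho_1^{d-1}$ is ``comfortably under $1/20$'' as a consequence of $\sigma\rho\leq 1/(100d)$ and $\rho_1^d\leq\rho/2$. This is false: those two inequalities only yield
\[
d\sigma\rho_1^{d-1}=d\sigma\frac{\rho_1^d}{\rho_1}\leq\frac{d\sigma\rho}{2\rho_1}\leq\frac{1}{200\rho_1},
\]
which blows up as $\rho_1\to 0$. For a concrete violation take $d=2$, $\sigma=100$, $\rho=10^{-5}$: then $\sigma\rho=10^{-3}<1/200$ but $\rho_1\approx 2.2\times 10^{-3}$ and $d\sigma\rho_1^{d-1}\approx 0.45$, so the conclusion $|\psi'|\leq 1/10$ cannot follow from the stated hypotheses alone. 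The paper's own written estimate conceals the same difficulty: the inequality $|\partial_y\ell|\leq \sigma\big(d\rho^{d-1}(1+M/R^{1/k})+\cdots\big)+M/R$ displays $\rho^{d-1}$, but since $y$ ranges over $\D_{\rho_1}$ with $\rho_1>\rho$, the correct power is $\rho_1^{d-1}$, not $\rho^{d-1}$; with the correct power, the hypothesis $\sigma\rho<1/(100d)$ is not sufficient. The lemma as written is false for large $\sigma$. What saves the application is that after one iteration the slope is pinned at $\sigma=1/10$ with $\rho_1\leq r<1/(10d)$, giving $d\sigma\rho_1^{d-1}\leq 1/100$; the very first step requires trimming $V$ more aggressively than the stated $\sigma\rho<1/(100d)$ allows. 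You should either add a hypothesis of the type $d\sigma\rho_1^{d-1}\leq 1/100$, or restrict $\sigma\leq 1/10$ and handle the first iteration separately.
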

 
Assuming this result for the moment, 
 let us  conclude the proof of the theorem.  
Let  $V$ be any germ of curve intersecting transversely   $\set{y=0}$ 
 at $(x_0, 0)\in U$. Then $V$ is a graph of slope $\sigma$ over some  
  disk $\D_\rho$ in the second coordinate, for some
 $\rho>0$. Reduce $\rho$ by trimming $V$ if necessary so that $\sigma\rho< 1/100d$.
Since $r/10 < 1/100d$, the  previous lemma implies that 
 we can define inductively a sequence of vertical graphs
$V = V_0$,  $V_n :=  f\inv V_{n-1}$, where $V_n$ is defined by a holomorphic function $z = \varphi_n(y)$ of uniformly bounded slope 
over $\D_{\rho_n}$, and furthermore  $\rho_n = r$ for  large enough   $n$.   
Moreover, we have $\Re(\varphi_n) \geq \Re(\varphi(0)) + 9n/10$, hence, coming back to the 
$(x,y)$ coordinates, we see that  $V_n = \{ x= (k\varphi_n(y))^{-1/k}\}$
converges in the $C^1$-topology to the curve $W^{\rm ss}_\loc(0)= \{x=0\}$.
\end{proof}

\begin{proof}[Proof of Lemma \ref{lem:vertest}]
Let $\Gamma$ be a vertical graph of equation $z=\varphi(y)$ in the $(z,y)$ coordinates, 
with $\varphi\in  \cG(\rho, \sigma)$. Then 
the equation of  $f\inv \Gamma$ is given by  $z= \ell(z,y)$, where 
\[ \ell(z,y) =  \varphi\lrpar{y^d \lrpar{1+  \unsur{z^{1/k}} b(z,y)}} +1  -  \unsur{z} a(z,y). \] 
Fix $y_0\in \D_{\rho_1}$. We 
  show that the equation $ z = \ell(z,y_0)$ admits a unique solution $z\in \Omega_R$. First, observe that by the 
  estimates~\eqref{eq:estimates} on $\abs{a}$ and $\abs{b}$, for $z\in \Omega_R$  we have   
\begin{align}
\label{eq:translation}\abs{\ell(z, y_0 )  - (\varphi(0)+1)}&\leq \abs{\varphi\lrpar{y_0^d \lrpar{1+  \unsur{z^{1/k}} b(z,y)}}  - \varphi(0)} + \frac{M}{R}
\\ \notag& \leq 2 \rho_1^d  \sigma +\frac{1}{100} \leq \rho  \sigma+\frac{1}{100} \leq 
 \unsur{10}\ ,
 \end{align}
hence  $\ell(\cdot, y_0)$ maps   $\Omega_R$ to itself. Next, we see that 
\begin{align*} \abs{\frac{\fr \ell}{\fr z}(z,y_0)} 
 &\leq \abs{y_0^d\lrpar{\unsur{z^{1/k}}  {\frac{\fr b}{\fr z}} - \frac{1}{kz^{1/k+1}}b}} \cdot \abs{\varphi' \lrpar{y_0^d \lrpar{1+  \unsur{z^{1/k}} b}}} + \abs{\frac{a}{z^2}- \unsur{z} {\frac{\fr a}{\fr z}}}\\
 &\leq \rho_1^d \frac{2M}{R^{1/k}}  \sigma +  \frac{2M}{R}\leq \frac{4}{100}\rho  \sigma + \frac{2}{100}\leq \frac{1}{10}\ ,
 \end{align*}
so $\ell(\cdot, y_0)$ is a contraction and the equation 
  $z = \ell(z, y_0)$ has a unique solution. This means that $f\inv\Gamma$ is a vertical graph over $\D_{\rho_1}$ determined by a holomorphic
  function $\psi$ satisfying $\psi(y) = \ell(\psi(y),y)$. 
  The slope of this graph can  be estimated as above: 
 \begin{align*} 
 \abs{\psi'(y)} 
 & \leq \abs{\frac{ {\fr\ell}/{ \fr y} }{1 -  {\fr\ell}/{ \fr z}}}
\le \frac{10}{9}  \left(  
\sigma \left(d \rho^{d-1} \left(1+\frac{M}{R^{1/k}}\right) + \rho^d \frac{M}{R^{1/k}}\right)
+ \frac{M}R\right) \le \frac1{10}.
 \end{align*}  
 Finally, the estimate $\Re(\psi) \geq \Re(\varphi(0)) + 9/10$ follows from~\eqref{eq:translation} and we are done. 
\end{proof}

\section{Proof of Theorem~\ref{thm:DMM} when $\nk$ is a number field}\label{sec:conclusion_number_field}

Here we establish the following more precise form of our main theorem, in the number field case.

\begin{thm}\label{thm:main_precised}
Let $\nk$ be a number field and  $f$ be  a regular polynomial map 
of $\A^2_\nk$. Denote by  $h_f$  the induced    canonical height.

Suppose that $C \subset \nA_\nk^2$ is an irreducible algebraic curve
containing a sequence of distinct points $p_n \in C(\nk^{\alg})$
such that 
$h_f(p_n) \to 0$. If there exists a point of $\overline C\cap L_\infty$ which is not 
eventually superattracting, then $C$ is preperiodic. 
\end{thm}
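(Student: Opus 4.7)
The strategy is to reduce the problem to a local statement at a well-chosen point at infinity, to carry out a dynamical analysis there, and then to propagate the conclusion globally. First, applying Proposition~\ref{prop:Green_harmonic} to $C$ gives that every point of $\overline C\cap L_\infty$ is preperiodic for $\overline f$, and that $g_v|_{C^{\an}_v}$ is harmonic on $\{g_v>0\}$ at every place $v$. Replacing $f$ by a sufficiently high iterate, we may assume those points are fixed by $\overline f$. By hypothesis we may then pick a fixed point $p\in\overline C\cap L_\infty$ whose multiplier $\lambda$ under $\overline f|_{L_\infty}$ is nonzero. Using the product formula on $\lambda$, we choose a place $v$ so that $p$ is either repelling ($|\lambda|_v>1$, when $\lambda$ is not a root of unity) or, after one more iterate arranging $\lambda=1$, parabolic at any Archimedean $v$. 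All subsequent analysis is carried out at this chosen place $v$.

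Passing to local analytic coordinates $(x,y)$ near $p$ with $p=(0,0)$ and $L_\infty=\{y=0\}$, the $1$-homogeneity from Proposition~\ref{prop:green_global} yields the decomposition
\[
g_v(x,y)=-\log|y|_v+\Phi(x,y),
\]
where $\Phi$ is continuous near $(0,0)$ (in the Berkovich analytic sense when $v$ is non-Archimedean), extends across $L_\infty$, and satisfies $\Phi(0,0)=g_{f_\infty,v}(p)=0$ since $p$ lies in the Julia set of $f_\infty$ at $v$. Via Theorem~\ref{thm:super-stable_manifold} and the coordinate normalizations of Section~\ref{sec:super-stable}, we may arrange $W^{\rm ss}_\loc(p)=\{x=0\}$ and place $f$ in the prepared form of Lemma~\ref{lem:preparation_saddle} (repelling case) or Lemma~\ref{lem:preparation_parabolic} (parabolic case).

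The heart of the proof is to show that every local analytic branch $C_0$ of $C$ at $p$ coincides with $W^{\rm ss}_\loc(p)$. Parametrizing $C_0$ as a graph $x=\varphi(y)$ in a suitable uniformizer (with intersection multiplicity $m\ge1$ with $L_\infty$), the function
\[
\tilde\phi(y):=\Phi(\varphi(y),y)=g_v|_{C_0}(y)+\log|y|_v
\]
satisfies $\tilde\phi(0)=0$ and, by the harmonicity statement in Proposition~\ref{prop:Green_harmonic} combined with the continuity of $\Phi$, is harmonic in a neighborhood of $0$. In the repelling case, define $y_n(\xi)$ implicitly by $\lambda^n\varphi(y_n(\xi))=\xi$ for $\xi$ in a small disk, so that $|y_n(\xi)|_v\sim|\xi|^{1/m}|\lambda|_v^{-n/m}$. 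Proposition~\ref{prop:renormalization} then gives $f^n(\varphi(y_n),y_n)\to(\xi,0)$, which combined with the invariance $g_v\circ f^n=d^n g_v$ and the decomposition of $g_v$ above yields the uniform decay estimate $|\tilde\phi(y_n(\xi))|=O(d^{-n})$. As $\xi$ sweeps a small disk, $y_n(\xi)$ sweeps a disk of radius of order $|\lambda|_v^{-n/m}$ around $0$, and Cauchy-type estimates for the harmonic function $\tilde\phi$ (iterated after splitting off lower-order Taylor contributions if necessary) force $\tilde\phi\equiv0$, whence $\varphi\equiv0$ upon comparison with the analogous decomposition of $g_v|_{W^{\rm ss}_\loc(p)}$. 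In the parabolic case, Theorem~\ref{thm:graph_transform_parabolic} plays the role of Proposition~\ref{prop:renormalization}: graph-transform convergence of $f^{-n}$ applied to transversals in a repelling petal, inserted into the invariance of $g_v$, forces the same identity $C_0=W^{\rm ss}_\loc(p)$.

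Once $C_0=W^{\rm ss}_\loc(p)$ as germs, the $f$-invariance of $W^{\rm ss}_\loc(p)$ implies that $C$ and $f(C)$ share a nontrivial analytic germ at $p$, so $C=f(C)$ by irreducibility; undoing the initial passage to an iterate, the original $C$ is preperiodic. The main obstacle lies in the third step: rigorously promoting the decay $|\tilde\phi(y_n(\xi))|=O(d^{-n})$ on exponentially shrinking disks to the pointwise identity $\tilde\phi\equiv0$. The Cauchy-type bounds are sharp, so all harmonic modes must be eliminated simultaneously, and in the non-Archimedean case this argument has to be performed within the Berkovich potential-theoretic framework of Thuillier~\cite{thuillier:phdthesis}.
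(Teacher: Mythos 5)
Your high-level strategy matches the paper's: use Proposition~\ref{prop:Green_harmonic}, pass to an iterate making all points of $\overline C\cap L_\infty$ fixed, select a non-superattracting fixed point $p$ with multiplier $\lambda$, split into the cases ``$\lambda$ a root of unity'' and ``$|\lambda|_v>1$ for some place $v$'', and exploit the super-stable manifold, the preparation lemma, and the rescaling/graph-transform estimates. However, the mechanism by which you propose to derive the conclusion in the repelling case is genuinely different from the paper's, and it contains several gaps that the paper's argument is carefully designed to avoid.

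First, the normalization $\Phi(0,0)=g_{f_\infty,v}(p)=0$ is not correct. Unlike the polynomial Green function on $\A^1$, the homogeneous Green function $G_v(0,\cdot,\cdot)$ of the rational map $f_\infty$ on $L_\infty\simeq\P^1$ does not vanish on the Julia set; one computes that at the fixed point $p=[0:0:1]$ with $Q_d(0,1)=\mu\neq 0$ one has $G_v(0,0,1)=\frac{\log|\mu|_v}{d-1}$, which is generically nonzero. Second, the estimate $|\tilde\phi(y_n(\xi))|=O(d^{-n})$ is not what the renormalization delivers. Tracking $g_v\circ f^n=d^ng_v$ through the decomposition $g_v=-\log|y|_v+\Phi$ and Lemma~\ref{lem:preparation_saddle} gives a correction term of size $O(\max(d^{-n},|\lambda|_v^{-n}))$; when $1<|\lambda|_v<d$ this decays only like $|\lambda|_v^{-n}$. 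Third, and most importantly, even with the correct decay rate, the Cauchy estimates you invoke only force the Taylor coefficients of order $<m$ of $\tilde\phi$ to vanish (where $m$ is the local intersection multiplicity of $C_0$ with $L_\infty$), which is consistent with a nontrivial branch, not a contradiction. You acknowledge this (``the Cauchy-type bounds are sharp'') but do not supply a way around it, and I do not think ``splitting off lower-order Taylor contributions'' rescues the argument. Finally, the inference ``$\tilde\phi\equiv 0\Rightarrow\varphi\equiv 0$'' is not justified by anything you've written; $\Phi$ is not a priori injective in the first variable.

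The paper instead makes the renormalization work by a different kind of comparison. It shows that $h:=G_f\circ\Psi\circ\Gamma$ is harmonic near $0$ and writes $d^{nq}h(t)=G_f\circ\Psi\circ\widetilde f^{nq}\circ\Gamma(t)+\log|\widetilde Q_{nq}\circ\Psi\circ\Gamma(t)|$. After the rescaling $t\mapsto t/\lambda^n$, the first summand converges by Proposition~\ref{prop:renormalization} to $G_f(0,z_1(t^q,0),1)$, and the whole expression minus the $\log|\widetilde Q_{nq}\circ\Psi\circ\Gamma|$ term is a sequence of harmonic functions, so the limit $t\mapsto G_f(0,z_1(t^q,0),1)$ is harmonic. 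This contradicts the fact that $\Delta G_f(0,\cdot,1)$ is the equilibrium measure of $f_\infty$, whose support is the Julia set and in particular contains the repelling fixed point $p$. No pointwise vanishing of any auxiliary potential is needed, and the argument is insensitive to the size of $|\lambda|_v$ relative to $d$. In the parabolic case, the paper also does not argue via the invariance equation of $g_v$ at all: it picks a repelling periodic point $q$ of $f_\infty$ inside a repelling petal, pulls back $W^{\rm ss}_\loc(q)$ under $f^n$ (Theorem~\ref{thm:graph_transform_parabolic}), invokes persistence of proper intersections to show $C$ must hit these pullbacks near $p$, then derives a contradiction via the Inclination Lemma and the Forn\ae ss--Sibony normal families criterion applied to a disk in $C$. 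Your sketch for the parabolic case (``inserted into the invariance of $g_v$, forces the same identity'') omits all of this and would run into the same difficulties as your repelling-case computation. In summary: the overall reduction is right and many of the ingredients are identified correctly, but the key step of converting harmonicity plus renormalization into a contradiction has genuine gaps that the paper's argument avoids by comparing with the equilibrium measure on $L_\infty$ rather than attempting to show a potential vanishes identically.
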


Let $f_\infty$ be the restriction  to the line at infinity $L_\infty$ of the extension of $f$ to $\P^2$. 
By Proposition~\ref{prop:Green_harmonic}, all points in  $\overline C\cap L_\infty$ are preperiodic, 
so   we may replace $f$ by $f^N$ and $C$ by 
$f^N(C)$, to assume that  $\overline C\cap L_\infty$ contains a fixed point $p$
which is not super-attracting. 
 Let  $\lambda = f_\infty'(p)$ be the multiplier of $p$ along $L_\infty$. 
 Then one of the two following mutually disjoint cases occur:
 \begin{enumerate}[(a)]
 \item either $\lambda$ is a root of unity;
 \item or there is a place $v\in M_\nk$ such that $\abs{\lambda}_v >1$. 
 \end{enumerate}
In the remainder of this section we  split the proof of the theorem according to these two cases.

\subsection{When $\lambda$ is a root of unity} 

In this situation we iterate $f$ further so that $\lambda=1$, and work over the complex numbers. Since $p$ belongs to the Julia set $J(f_\infty)$, which is a perfect set, the union of attracting and repelling petals cover a punctured neighborhood of $p$, 
and the attracting petals of $p$ are contained in the Fatou set, 
we see that there is a repelling periodic point $q$
of $f_\infty$ contained in some local repelling petal of $p$. 
Then the local (super-)stable manifold of $q$ is a
disk transverse to $L_\infty$ at $q$, and by Theorem~\ref{thm:graph_transform_parabolic}, the local 
truncated pull-backs    $f^{-n}(W^{\rm ss}_\loc(q))$ 
under $f^n$ converge to $W^{\rm ss}_{\loc}(p)$ when $n\to \infty$. (Here $f\inv$ refers to the branch fixing $p$. )

Assume by way of contradiction that  
$\overline C$ does not locally coincide with $W^{\rm ss}_{\loc}(p)$. We rely on the following claim.
 \begin{lem}\label{lem:trans}
 {For all $n$ large enough,  $C$ intersects $f^{-n} (W^{\rm ss}_\loc(q))$ transversely.}
 \end{lem}
 {Let us work in a sufficiently small neighborhood $V$ of $p$,
 so that the critical set of $f$ in $V$  is equal to the line at infinity. Fix a repelling periodic point 
 $q\in V\cap L_\infty$  such that the backward orbit of $q$ under the branch of $f\inv_\infty$ fixing $p$ is contained in $V\cap L_\infty$. Let $k$ be its period. 
 For large $n$, by the previous lemma there exists  a transverse intersection point 
 $p_n$ of $C$ and $f^{-n} (W^{\rm ss}_\loc(q))$ in $V\cap \C^2$. Fix a small disk 
 $\Delta$ in $C$ containing this intersection point  
 Since $V\cap \C^2$ is disjoint from the critical set, 
  for every $\ell\ge 0$, $f^{n+\ell k}(p_n)$ is a transversal intersection of $C$ and $W^{\rm ss}_\loc(q)$, and by the Inclination Lemma,  the derivative of 
$f^{n+ \ell k}$ in the direction of $\Delta$ tends to infinity with $\ell$ , thus 
 $(f^{n+\ell k}\rest{\Delta})_{\ell\geq 0}$ is not a normal family. On the other hand, 
 by Proposition~\ref{prop:Green_harmonic}, $g\rest{\Delta}$ is harmonic, which 
 implies that $(f^{n+\ell k}\rest{\Delta})$ is  normal  (see the proof of~\cite[Prop. 5.10]{fornaess-sibony:cplxdynhigherdim2}). 
 This contradiction shows that 
 $\overline C$   locally coincides with   $W^{\rm ss}_{\loc}(p)$ near $p$,  so it is fixed under $f$, and by irreducibility this property propagates to the whole of $C$. This completes the proof in this case.   \qed
}
\begin{proof}[Proof of Lemma~\ref{lem:trans}]
{
The proof is an adaptation of that of~\cite[Lemma~6.4]{zbMATH00447094} which treats the case where the germ of $C$ at $p$ is smooth. 
We work in a small neighborhood of $p$ with coordinates $(x,y)$ in the unit bidisk, 
such that the line at infinity is equal to $\set{x=0}$, 
and $W^{\rm ss}_\loc(p)=\set{y=0}$. 
By Theorem~\ref{thm:graph_transform_parabolic}, in this  bidisk
$D_n= f^{-n} (W^{\rm ss}_\loc(q))$ is a graph of the form  $y=f_n(x)$, 
for some holomorphic map $f_n:\D\to \D$, and 
 $(f_n)$ converges uniformly to 0.  
Since $D_n$ does not intersect $\set{y=0}$, it follows that
$f_n$ does not vanish on $\D$. Write $y_n = f_n(0)$. Then by the Harnack inequality applied to 
the negative harmonic function $\log\abs{f_n}$, for $x\in \D$
we get 
\begin{equation}\label{eq:harnack}
\abs{y_n}^{\frac{1+\abs{x}}{1-\abs{x}}} \leq  \abs{f_n(x)}\leq \abs{y_n}^{\frac{1-\abs{x}}{1+\abs{x}}}.
\end{equation}
}

{
Up to scaling in one direction, we may choose a local injective parameterisation 
$t \mapsto (t^p(1+ h(t)), t^q)$ of $C$ with $p, q$ coprime integers and $h(0)=0$. 
The points of intersection of $C$ and $D_n$ are determined by the equation 
\begin{equation} \label{eq:solve}
t^q-y_n= g_n\left(t^p(1+ h(t))\right)~,
\end{equation}
with $g_n(\cdot)= f_n(\cdot) -y_n $. 
Since  $C$ and $\set{y=0}$  have an isolated intersection of multiplicity $q$ at $0$,
by the persistence of proper intersections 
  we know that for large $n$  this equation admits $q$ solutions in any given neighborhood of 0. 
To prove the lemma it is enough to show that these solutions are simple.  More precisely, we will 
prove that for 
  large $n$, the equation~\eqref{eq:solve} admits $q$ simple solutions of the form 
$c_n + \delta$ with $c_n^q=y_n$ and $|\delta|\ll   |y_n|^{1/q}$.}

Fix $\eta< p/q$. We know that for large enough $n$,  all intersection points 
are contained in {$D\lrpar{0, \frac{\eta}{2-\eta}}$}, so that 
$\frac{1-\abs{x}}{1+\abs{x}}\geq 1 - \eta$.  
By the Schwarz lemma and \eqref{eq:harnack}, in this domain we have  
\[\abs{g_n(x)} \leq 2 \norm{f_n}_{D\lrpar{0, \frac{\eta}{2-\eta}}} \abs{x} \leq 2\abs{y_n}^{1-\eta}\abs{x}.\]
Let $c_n$ be one of the $q^{\rm th}$ roots of $y_n$  and write $t = c_n+\delta$. 
Then~\eqref{eq:solve} becomes  
\begin{equation*} 
(c_n+\delta)^q - c_n^q = g_n( (c_n+\delta)^p(1+h(c_n+\delta)))
\end{equation*}
We can write $(c_n+\delta)^q - c_n^q= \delta \theta_n(\delta)$, with 
  $|\theta_n(\delta)|\ge A  |c_n|^{q-1}$ when $|\delta|\ll |c_n|$ and the previous equation becomes 
\[
\delta = g_n( (c_n+\delta)^p(1+h(c_n+\delta))) \theta_n(\delta)^{-1}=: G_n(\delta)~.
\]
The previous estimates give $|G_n(\delta)| \lesssim \abs{y_n}^{1-\eta} \abs{c_n}^p |c_n|^{1-q} = O( 
\abs{c_n}^{1+p - q\eta})$. The  choice of $\eta$ guarantees that 
$1+p-q\eta>1$, hence  Rouché's theorem gives a unique solution to our equation in the range $\abs{\delta}\ll\abs{c_n}$, 
 as required.
\end{proof}

\begin{rmk}
The same proof works   when $\abs{\lambda}_v>1$ at some Archime\-dean place, 
which may {help the reader} 
understand the non-Archimedean argument below.
\end{rmk}

\subsection{When $\abs{\lambda}_v >1$}
We may assume that $p=[0:0:1]$, and by Lemma~\ref{lem:preparation_saddle} 
find a local analytic isomorphism 
$(x,y) \mapsto \psi(x,y) = [z_0(x,y):z_1(x,y):1]$
such that $\psi(0) = p$, $z_0(x,0) = 0$ so that
$\set{y=0}$ corresponds to  the line at infinity\footnote{Beware that coordinates are swapped here
: $\set{z_0 = 0}$ corresponds to $\set{y=0}$.}, 
and write
\[
\wt{f}:=
\psi^{-1}\circ f \circ \psi: (x,y) \longmapsto   \Big(\lambda x \big(1+ xy g(x,y)\big) , y^d\big(1+ x h(x,y)\big)\Big)
~.
\]
If $\overline{C}$ has an analytic branch at $p$ which coincides with $\set{x=0}$
then  $C$ is fixed as above, and we are done. Otherwise, we may find 
a Puiseux parameterization of a branch of $\overline{C}$ at $p$ in the $(x,y)$ coordinates
of the form $\Gamma(t)= (t^q , \gamma(t))$, where $\gamma$ is analytic and 
defined in a small disk $\D_\delta$,   
and $q\in \N^*$. We seek a contradiction. 

We lift $\psi$ to $\A^3_\nk$, and set $\Psi(x,y) = (z_0(x,y),z_1(x,y),1)$. 
As in \S\ref{sec:local green}, we lift $f$ to a homogeneous polynomial map $F\colon \A^3_\nk \to  \A^3_\nk$, of the form 
$F(z_0,z_1,z_2) = (z_0^d ,\wt{P},\wt{Q})$. Since $\Psi \circ \psi \inv$ is a local section of the projection $\A^3\setminus \set{0}\to \P^2$, 
$\Psi \circ \widetilde{f}$   
must be a multiple of $F\circ \Psi$. From the expression of $F$ we obtain
\[
\Psi \circ \widetilde{f} = \frac{F \circ \Psi}{\wt{Q} \circ \Psi}~.
\]
To simplify notation, we write 
$F^n(z_0,z_1,z_2) = (z_0^{d^n} ,\wt{P}_n,\wt{Q}_n)$. 
We consider the $1$-homoge\-neous Green function $G_f \colon \A^{3,\an}_v \to \R$ of Proposition~\ref{prop:green_global};  
it satisfies
$G_f \circ F = d G_f$ and $g_f(z_1,z_2)= G_f(1,z_1,z_2)$.

Observe that  $h := G_f \circ \Psi \circ \Gamma $ is a continuous  function on $\D_\delta$. 
Since  $h(t) = g_f \circ \psi \circ \Gamma (t) + \log |z_0 \circ \Gamma(t)|$, 
by Proposition~\ref{prop:Green_harmonic}, $h$ is harmonic on $\set{t\neq0}$. 
Since it is continuous at $0$, it is also harmonic on $\D_\delta$, see, e.g.,~\cite[Lemma 3.7]{favre-gauthier:classcurvescubicpol}.
Write
\begin{align*}
d^{nq} h (t) 
&= 
G_f \circ F^{nq}\circ \Psi \circ \Gamma (t)
\\
&= 
G_f \circ \Psi \circ \widetilde{f}^{nq} \circ \Gamma(t) + \log \abs{\wt{Q}_{nq} \circ \Psi \circ \Gamma(t)}~.
\end{align*}
By Proposition~\ref{prop:renormalization}, $\widetilde{f}^{n} \big(\frac{x}{\lambda^n},y\big) \to (x,0)$ uniformly in a neighborhood of the origin, hence
\[G_f \circ \Psi \circ \widetilde{f}^{nq} \circ \Gamma \big(\tfrac{t}{\lambda^{n/q}}\big) \to G_f \big(0, z_1(t^q,0), 1\big)\]
as $n\to\infty$. 
On the other hand, 
since $d^{nq} h \big(\tfrac{t}{\lambda^{n/q}}\big)  - \log \abs{\wt{Q}_{nq} \circ \Psi \circ \Gamma\big(\tfrac{t}{\lambda^{n/q}}\big)}$ is a sequence of harmonic functions, 
it follows that $t \mapsto G_f (0, z_1(t^q,0), 1)$ is   harmonic as well. 

Now, observe that the restriction of $f$ to the line at infinity is 
$f_\infty [z_1:z_2] = \big[ \wt{P}(0,z_1,z_2) : \wt{Q}(0,z_1,z_2)\big]$, so that
$G_f (0,z_1, z_2)$ is the global Green function of $f_\infty$. 
The equilibrium measure of $f_\infty$ is the probability measure
on the analytification of $L_\infty$ defined by  
$\mu_{f_\infty} := \Delta G_f (0,z_1, 1)$ in the chart $z_2\neq 0$. 
Its support is the Julia set of $f_\infty$ (see~\cite[Theorem~13.39]{benedetto:dyn1NA}), 
and it contains all repelling (rigid) fixed points, see~\cite[Theorem~8.7]{benedetto:dyn1NA}.
Therefore,  $z_1\mapsto 
G_f (0, z_1, 1)$ cannot be harmonic near $0$,
hence the function $t \mapsto  G_f (0, z_1(t^q,0), 1)$ cannot be harmonic either. 
This  contradiction concludes the proof. \qed
 
\begin{rmk}\label{rmk:fixed}
Under the assumptions of Theorem~\ref{thm:main_precised}, the proof shows that the 
preperiod $k$ of $F$ and the period of $f^k(C)$ are exactly the same as that of any of its non-superattracting points at infinity. 
\end{rmk}

\section{Proof of Theorem~\ref{thm:DMM} for arbitrary $\nk$}\label{sec:spec}

In this section we use a specialization argument to deal with maps defined over arbitrary fields. 
It shares some arguments with~\cite[\S 5]{DMM-henon} (see also \cite[\S 7]{finite_orbits}). 
Nevertheless,  new ideas are needed to  deal with preperiodic points  instead of periodic ones. 

We are in the setting of Theorem~\ref{thm:DMM}, so we assume that $f$ is a regular polynomial map of $\A^2$ of degree $d\ge 2$ defined over a field $\nk$
of characteristic $0$, and $C$ is a curve containing
 an infinite set $\mathcal P = \set{p_n, \ n\geq 0}$ 
 of preperiodic  points 
and whose closure $\overline{C} \cap L_\infty$ contains at least one point which is not
eventually super-attracting.   

By enlarging  $\nk$ if necessary we may assume that it contains the algebraic closure $\Q^{\rm alg}$
of its prime field. 
Let $R$ be the sub-$\Q^{\alg}$-algebra of $\nk$ generated by 
all coefficients defining $f$ and $C$. Its fraction field $K$ 
is  finitely generated over $\Q^{\alg}$. Let $S = \spec R$.
This is an affine variety defined over $\Q^{\alg}$, and
elements of $R$ can be seen as regular functions on $S$. 

Inverting some elements of $R$ if necessary, we may suppose that 
$C$ is flat over $S$, and $f$ extends as a morphism $f\colon \P^2_S   \to  \P^2_S$.
We let $\pi \colon \P^2_S \to S$ be the canonical projection,  and write $\P^2_s = \pi^{-1}(s)$. 
We also let $\A^2_s:= \A^2_S \cap  \pi^{-1}(s)$.

For each  (scheme theoretic) point $s\in S$, we write $C_s = C\cap \A^2_s$ and let $\overline{C}_s$
  be the closure of $C_s$ in $\P^2_s$. The flatness of the morphism $C\to S$ implies 
  $\overline{C}_s$ (hence $C_s$) to be a curve. 
  Similarly, we let $f_s \colon \P^2_s \to \P^2_s$ be the induced map on the fibers: 
this is an endomorphism of degree $d$.

We also denote by $p_{n,s}\in \A^2_s$ the specialization of $p_n$. Note that 
 $p_n$ is defined over some finite extension of $K$ which depends on $n$.

The first result does not use the assumption that our infinite set of preperiodic points lies on a curve.

\begin{prop}\label{prop:keeping}
Let as above $ {f}\colon \P^2_S \to \P^2_S$  be a family of endomorphisms   over an affine variety defined over $\Q^{\alg}$, and let 
$\mathcal {P}  = \set{p_n, \ n\geq 0}$ be an infinite family of preperiodic points. Then there exists a 
non-empty Zariski open and dense subset $U\subset S$ such that for any $s\in U\cap S(\Q^{\mathrm{alg}})$, 
$\mathcal P_s = \set{p_{n, s}, \ n\geq 0}\subset \A^2_s$ is infinite.
\end{prop}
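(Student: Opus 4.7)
The plan is to split $\mathcal{P}$ according to the forward dynamics, and combine the Shub-Sullivan bound on the multiplicity of isolated periodic points with a Chio-Roeder type analysis of the splitting of preimages of a super-attracting cycle. To each $p_n$ I associate the minimal pair $(k_n,\ell_n)\in\N\times\N^*$ with $f^{k_n+\ell_n}(p_n)=f^{k_n}(p_n)$, and I set $q_n:=f^{k_n}(p_n)$, so $q_n$ is periodic of exact period $\ell_n$. If $\{q_n\}_n$ is infinite, then the identity $q_{n,s}=f_s^{k_n}(p_{n,s})$ shows that it is enough to prove the proposition for the periodic family $\{q_n\}$. Otherwise, by pigeonhole, some periodic $q$ satisfies $q_n=q$ for infinitely many $n$, and the corresponding $p_n$ are pairwise distinct preimages of $q$ under iterates $f^{k_n}$ with $k_n\to\infty$.

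First I would treat the periodic case. Since there are only finitely many periodic points of any given period, the periods $N_n$ of the $p_n$ must be unbounded. Let $Z\subset S$ be the union over $N$ of the closed loci where $\{z:f_s^N(z)=z\}$ has a positive-dimensional component; this is a countable union of closed subvarieties, contained in a proper subvariety $Z_0\subset S$ once one peels off the (finitely many) generic invariant curves of $f$ over $K$, whose specializations remain curves of periodic points and contribute infinitely many specialized preperiodic points directly. For $s\in S(\Q^{\alg})\setminus Z_0$, every periodic point of $f_s$ is isolated, so the algebraic version of the Shub-Sullivan theorem~\cite{shub-sullivan} provides, for each such periodic point $q$ of period $m$, a uniform bound $C(q)$ on the local multiplicity of $q$ in the fixed scheme of $f_s^{km}$ as $k$ varies. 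Since the generic scheme of $N$-periodic points is reduced, this multiplicity equals the number of $p_n$ of period dividing $km$ specializing to $q$. Were $\mathcal{P}_s$ finite, pigeonhole would force infinitely many $p_n$ to specialize to a single $q$, contradicting $C(q)<\infty$.

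Next I would handle the preimage case. All $p_n$ are distinct points of $f^{-k_n}(q)$, with $q$ a fixed periodic point and $k_n\to\infty$. Distinctness forces $q$ not to be generically super-attracting, since otherwise $f^{-k}(q)$ would reduce set-theoretically to $\{q\}$, precluding an infinite family of distinct preimages. As $s$ varies, the ramification of $f_s$ at $q_s$ may degenerate, causing preimages that are distinct generically to collide under specialization. The locus $T\subset S$ where this occurs is analyzed by Chio-Roeder~\cite{chio-roeder} via the local dynamics near a super-attracting periodic orbit, and is shown to be contained in a proper Zariski closed subvariety of $S$. Taking $U:=S\setminus(Z_0\cup T)$ then gives the desired open and dense subset.

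The main obstacle is the uniformity in $k_n$ in the preimage case: a priori the collapse loci for $f^{-k}(q)$ could grow in complexity as $k\to\infty$ and fail to fit inside a single proper subvariety of $S$. The Chio-Roeder framework supplies this uniformity through a study of the combinatorics of preimages near a super-attracting orbit (via local normal forms and associated Newton-polygon techniques), showing that the collisions are dictated by a fixed bounded amount of data and are therefore confined to a proper subvariety of $S$ independent of $k$.
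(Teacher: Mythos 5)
Your case split into a periodic family and preimages of a single fixed point $q$, with Shub--Sullivan controlling the periodic case, parallels the structure of the paper's proof (which shows that the periodic tails $\{q_n\}$ form a finite set once $\mathcal{P}_{s_0}$ is finite at some parameter $s_0$, and then passes to the preimage case). However, the periodic case has a gap: you remove a locus $Z=\bigcup_N Z_N$ where some iterate $f_s^N$ acquires a positive-dimensional fixed locus and claim $Z$ lies in a proper subvariety, but $Z$ is a countable union of proper closed subsets that need not be contained in any single proper closed subvariety, and deleting the generic invariant curves of $f$ over $K$ does not control where \emph{specializations} of $f^N$ develop new curves of periodic points. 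The paper avoids this by working only at the distinguished $s_0$ and using upper semicontinuity of fiber length (properness plus Nakayama) for the finite $S$-scheme $\mathcal{Q}_\ell$, giving $\#\mathcal{Q}_\ell\le rC$ with $r=\#\mathcal{Q}_{s_0}$ and $C$ the Shub--Sullivan bound at $s_0$. (The reducedness of the generic fixed-point scheme that you invoke is neither needed nor true in general.)

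The preimage case is the bigger gap. Invoking Chio--Roeder is not a proof, and it is in fact the wrong tool here: in this paper Chio--Roeder enters only in Proposition~\ref{prop:infinity_periodic}, to control the preperiodicity of the marked point at infinity under the one-dimensional map $f_\infty$. For Proposition~\ref{prop:keeping} the argument is much more elementary: the local degree $d(s)$ of $f_s$ at the fixed point $q_s$ is Zariski upper semicontinuous and bounded by $d^2$, hence equal to a constant $d_{\min}$ off a proper Zariski closed subset $H$; for any $s_1\notin H$, local constancy of the degree forces $f_s^{-1}(q_s)\cap V=\{q_s\}$ on a fixed analytic neighborhood $V$ of $q_{s_1}$ for $s$ near $s_1$, and a short analytic-continuation argument (applied to $f^{k_n-1}(p_n)$) then shows that $p_{m,s_1}\neq p_{n,s_1}$ for $m\neq n$. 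Incidentally, your assertion that super-attraction of $q$ would make $f^{-k}(q)$ collapse set-theoretically to $\{q\}$ is incorrect: that requires total invariance of $q$, not merely super-attraction, and the paper's argument is indifferent to the multiplier at $q$ in any case.
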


Before starting the proof, let us fix some additional notation. For each $n\geq 0$, 
 we denote by $k_n$ the preperiod of $p_n$, so that 
 $q_n  := f^{k_n}(p_n)$ is the first periodic point in the orbit of $p_n$. 
We let $\ell_n$ be the (primitive) period of $q_n$. 

\begin{proof}
We may suppose that there exists a parameter $s_0\in S(\Q^{\rm alg})$ such that $\mathcal P_{s_0}$ is finite (otherwise we take $U=S$ and the proof is complete). 

\begin{lem}
The family of periodic points $(q_n)$ is finite. 
\end{lem}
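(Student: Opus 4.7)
My plan is to proceed by contradiction, assuming $\{q_n : n \ge 0\}$ to be infinite. The finiteness of $\mathcal{P}_{s_0}$ together with the pigeonhole principle allows me to extract an infinite subsequence (not relabelled) along which the specializations $p_{n,s_0}$ are all equal to a single point $p^* \in \A^2_{s_0}$. Since $p^*$ is $f_{s_0}$-preperiodic, each $q_{n,s_0} = f_{s_0}^{k_n}(p^*)$ lies in its finite forward orbit, so a further extraction yields $q_{n,s_0} = q^*$ for all $n$, with $q^*$ periodic under $f_{s_0}$ of some period $N$. Setting $g := f^N$, the point $q^*$ is fixed by $g_{s_0}$, while each $q_n$ remains periodic for $g$ at the generic fiber with period $m_n := \ell_n/N$.

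If the integers $m_n$ are bounded by some $L$, then all $q_n$ lie in the fiber over the generic point of the relative scheme $\on{Fix}(g^L) \subset \P^2_S$, which is finite; this contradicts the infiniteness of $\{q_n\}$, so after extraction I may assume $m_n \to \infty$. Base-changing $\Q^{\alg}$ along an embedding into $\C$ and working in a Euclidean neighborhood of $s_0 \in S(\C)$, each algebraic periodic point $q_n$ extends to a local analytic section $s \mapsto q_n(s)$ of fixed points of $g^{m_n}$ with $q_n(s_0) = q^*$; specializing at $s_0$ then produces infinitely many distinct fixed points of $g_{s_0}^{m_n}$ accumulating at the single fixed point $q^*$.

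The crucial input is the Shub--Sullivan theorem~\cite{shub-sullivan}: as soon as $q^*$ remains an isolated fixed point of $g_{s_0}^m$ for each $m$, the local Lefschetz index of $g_{s_0}^m$ at $q^*$ is uniformly bounded in $m$. Since each of the distinct $q_n$ contributes at least $1$ to this index after specialization, this yields the desired contradiction. I expect the main obstacle to lie in the degenerate case where $q^*$ fails to be isolated in $\on{Fix}(g_{s_0}^m)$, which arises precisely when some eigenvalue of $Dg_{s_0}(q^*)$ is a root of unity and a positive-dimensional component of $\on{Fix}(g_{s_0}^m)$ passes through $q^*$. To handle this, I plan to exploit the freedom in choosing $s_0$: the locus in $S$ on which such a root-of-unity degeneracy of the continuation of $q^*$ occurs cuts out a countable union of proper Zariski-closed subsets, and by the Zariski density of $S(\Q^{\alg})$ I can select another suitable specialization $s_0$ outside this union, reducing back to the generic Shub--Sullivan case.
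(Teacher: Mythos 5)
Your overall strategy is the same as the paper's: use the finiteness of $\mathcal P_{s_0}$ to reduce to finitely many limit cycles, and then invoke Shub--Sullivan together with the fact that periodic-point multiplicity can only go up under specialization. The paper implements this cleanly by bounding, for each $\ell$, the total multiplicity $\sum_{x\in\mathcal Q_{\ell,s}}\mathrm{mult}_x(\mathcal Q_{\ell,s})$ via Nakayama/upper-semicontinuity and then Shub--Sullivan at $s_0$, yielding $\#\mathcal Q_\ell\le rC$ uniformly in $\ell$; your version works pointwise at a single accumulation point $q^*$, which is fine in spirit.

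However, your final step has a real gap. After passing to the subsequence with $q_{n,s_0}=q^*$ and $m_n\to\infty$, you write that ``each of the distinct $q_n$ contributes at least $1$ to this index,'' but the $q_n$ contribute to the Lefschetz/intersection index of \emph{different} iterates $g_{s_0}^{m_n}$, one for each $n$. Shub--Sullivan only gives a uniform bound for the index of $g_{s_0}^m$ at $q^*$ over all $m$; knowing that each of these indices is $\ge 1$ contradicts nothing. You need one of the following two extra observations to close the argument. (i) For any $M$, set $\ell:=\mathrm{lcm}(m_1,\dots,m_M)$; then all of $q_1,\dots,q_M$ are distinct points of $\mathrm{Fix}(g^\ell)$ over the generic point, and they \emph{all} specialize to $q^*$, so $\mathrm{mult}_{q^*}\mathrm{Fix}(g_{s_0}^{\ell})\ge M$, contradicting Shub--Sullivan once $M>C$. (ii) Alternatively, observe that the entire orbit $q_n,g(q_n),\dots,g^{m_n-1}(q_n)$ of $q_n$ (of cardinality $m_n$) collapses to $q^*$ at $s_0$, since $q^*$ is $g_{s_0}$-fixed; hence $\mathrm{mult}_{q^*}\mathrm{Fix}(g_{s_0}^{m_n})\ge m_n\to\infty$, again contradicting the uniform bound. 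Note that version (i) makes your bounded/unbounded case distinction unnecessary (it also avoids the minor slip that ``$m_n$ bounded by $L$'' should be ``$m_n$ divides $L!$'' to land in a single $\mathrm{Fix}(g^{L!})$).

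Your closing remark about moving $s_0$ to avoid the degenerate case where $q^*$ is non-isolated in $\mathrm{Fix}(g_{s_0}^m)$ is a legitimate precaution (and one the paper handles only implicitly through the finiteness of $\mathcal Q_\ell\to S$). That part of the plan is sound.
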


\begin{proof}
We follow the arguments of~\cite[\S 5]{DMM-henon}.
Set $\mathcal Q = \set{q_n, \ n\geq 0}$. 
For each $\ell\geq 1$, we consider the subvariety $\mathrm{Per}_\ell$ 
of $\P^2_S$ defined by the equation 
$f^\ell(z) = z$. 
Since $\P^2_S\to S$ is proper, the structure map $\mathrm{Per}_\ell \to S$ is also proper. 

Let $\mathcal Q_\ell$ be the union of the irreducible components of $\mathrm{Per}_\ell$ containing a point of 
$\mathcal P$. Its underlying set is  the Zariski closure of $\mathcal P\cap \mathrm{Per}_\ell$, hence
$\mathcal Q_\ell \to S$ is proper.
Observe that for $x\in \mathcal Q_{\ell, s}$, the multiplicity of $x$ as a point of 
$\mathcal Q_{\ell, s}$ equals 
 its multiplicity as a fixed point of $f_s^\ell$. 
By Nakayama's lemma and the properness of $\mathcal Q_\ell$ over $S$,
the function 
\begin{equation}\label{eq:mult}
 s\longmapsto \sum_{x\in \mathcal Q_{\ell, s}} \mathrm{mult}_x(\mathcal Q_{\ell, s})
\end{equation}
is   upper semi-continuous for the Zariski topology, hence 
\begin{equation}\label{eq:mult2}
\sum_{q\in \mathcal Q_\ell} \mathrm{mult}_q(\mathcal Q_\ell)
\leq
\sum_{x\in \mathcal Q_{\ell, s_0}} \mathrm{mult}_x(\mathcal Q_{\ell, s_0}),
\end{equation}
where the left hand side is the value of~\eqref{eq:mult} at the generic point.  
By assumption $\mathcal P_{s_0}$ is a finite set, hence so does 
$\mathcal Q_{s_0} = \set{q_{1, s_0}, \ldots , q_{r, s_0}}$ 
and by the Shub-Sullivan theorem~\cite{shub-sullivan}, there exists a uniform bound $C>0$
such that for every $j$, and for any $\ell$, we have
\[\mathrm{mult}_{q_{j,s_0} }(\mathcal Q_{\ell, s_0}) \leq \mathrm{mult}_{q_{j, s_0}}(\mathrm{Per}_{\ell, s_0}) \le C~.\]
It then follows from~\eqref{eq:mult2} that 
\[
\# \mathcal Q_\ell \le \sum_{q\in \mathcal Q_\ell} \mathrm{mult}_q(\mathcal Q_\ell)
\le r C\]
hence $\bigcup_\ell \mathcal Q_\ell$  is finite, 
   as was to be shown. 
\end{proof}

By  the previous lemma, 
replacing $f$ by some iterate 
$f^N$ we may assume that all periodic points $q_n$ are fixed.
Since $\mathcal P$ is infinite, one of these fixed points, say  $q_1$,
 admits   infinitely many preimages   in $\mathcal P$. 
 We may denote $q=q_1$ and suppose $\mathcal P$ is made of an infinite set of 
 preimages of $q$, that is, (after possible reordering of $\mathcal P$) for any $p_n\in \mathcal P$ there is a minimal 
 $k_n\geq 0$ such that $f^{k_n}(p_n) = q$, and that $k_{n+1}>k_n$.
We may adjoin to $R$ the coordinates of $q$ so that 
$q \in \A^2(R)$, i.e., for any  $s\in S$, $q_s$ is a single point
(to say it differently, we replace $S$ by a  branched cover of a Zariski open dense subset of $S$).
 
 Let $d(s)$ be the local degree of $f_s$ at $q_s$, which is 
 upper semicontinuous for the Zariski topology. 
 Since $f_s$ is a finite map of degree $d^2$, $d(s)\leq d^2$ for every $s$. 
Thus there is an analytic hypersurface $H$ such that $d(s)  = d_{\mathrm{min}}$ is constant for 
$s\in S\setminus H$.

We claim that $\mathcal P_{s_1}$ is infinite
for any $s_1\in S\setminus H$. We argue again 
in the complex analytic category fixing an embedding
 $\Q^{\alg}$ into $\C$.  Observe that for any point $s\in S(\C)$,    $p_{n,s}$ is  
  a finite set included in the fiber $\A^{2,\an}_s\simeq \C^2$ (not necessarily reduced to a single point since 
   $p_n$ lies in a finite extension of $R$).

 Fix an analytic neighborhood $V$ 
of $q_{s_1}$ in $\P^{2,\an}_{s_1}(\C)$ such 
that $f_{s_1}\inv(q_{s_1})\cap \overline V = \set{q_{s_1}}$. 
Since $d(s)$ is locally constant near $s_1$, 
there is an analytic neighborhood $W$ of $s_1$ in $S^{\an}(\C)$ such that 
for $s\in W$,   
\begin{equation}\label{eq:isolated}
f_{s}\inv(q_{s})\cap   V = \set{q_{s}}
\end{equation}
Choose any $n>m$, and suppose by contradiction that $p_{m, s_1} = p_{n, s_1}$. 
Since $k_n-1\geq k_m$, we have
$$ f^{k_n-1}_{s_1}(p_{n, s_1})= f^{k_n-1}_{s_1}(p_{m, s_1}) = q_{s_1}.$$ 
Thus, for $s$ close to $s_1$, the finite set  $f^{k_n-1}_{s_1}(p_{n, s_1})$ is contained in 
  $V$, hence by~\eqref{eq:isolated}, $f^{k_n-1}_{s}(p_{n, s}) =q_s$, and by analytic continuation this property holds throughout $S$, which contradicts the definition of $k_n$.  

This shows that the  $p_{n,s}$ are all distinct for all $s\in S\setminus H$, and
concludes the proof of Proposition~\ref{prop:keeping}. 
\end{proof}

\begin{prop}\label{prop:infinity_periodic}
Let   ${f}\colon \A^2_\nk \to \A^2_\nk$  be a regular polynomial map and $C\subset \A^2_\nk$ be an algebraic curve containing infinitely many preperiodic points. 
Then every point of $\overline C\cap L_\infty$ is preperiodic under $f|_{L_\infty}$. 
\end{prop}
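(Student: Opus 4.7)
The plan is to reduce to the number-field case (Proposition~\ref{prop:Green_harmonic}) via the specialization framework introduced in this section. Concretely, I enlarge $\nk$ so that $\nk\supset\Q^{\alg}$, let $R\subset\nk$ be the sub-$\Q^{\alg}$-algebra generated by the coefficients of $f$ and $C$ (together with those of the points of $\overline{C}\cap L_\infty$ after a finite base change of $S$), $K=\mathrm{Frac}(R)$ and $S=\spec R$. The setup then presents $f$, $C$, $\mathcal P$ and the sections $p_1,\ldots,p_r\colon S\to L_\infty$ of $\overline{C}\cap L_\infty$ as algebraic data over $S$.

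Applying Proposition~\ref{prop:keeping}, I obtain a nonempty Zariski open $U\subset S$ with $\mathcal P_s$ infinite for every $s\in U\cap S(\Q^{\alg})$. For each such $s$, both $f_s$ and $C_s$ are defined over a number field contained in $\Q^{\alg}$, and $\mathcal P_s$ is an infinite set of preperiodic points of $f_s$ on $C_s$. Proposition~\ref{prop:Green_harmonic} then applies at $s$ and yields that every point of $\overline{C_s}\cap L_\infty$, in particular each $p_i(s)$, is preperiodic for $(f_s)_\infty$.

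To transfer this information back to the generic fiber, I fix $i$ and consider for each pair $(k,\ell)$ with $\ell\geq 1$ the Zariski closed subscheme
\[
E^{(i)}_{k,\ell}:=\bigl\{s\in S \;:\; (f_s)_\infty^{k+\ell}(p_i(s))=(f_s)_\infty^k(p_i(s))\bigr\}\subset S.
\]
The previous step gives $U\cap S(\Q^{\alg})\subset\bigcup_{k,\ell}E^{(i)}_{k,\ell}(\Q^{\alg})$, and it suffices to prove $E^{(i)}_{k_0,\ell_0}=S$ for some pair $(k_0,\ell_0)$, since this yields $f_\infty^{k_0+\ell_0}(p_i)=f_\infty^{k_0}(p_i)$ over $K$, hence over $\nk$.

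This last step is the main obstacle: dense coverage of the countable set $U\cap S(\Q^{\alg})$ by a countable family of proper Zariski closed subschemes does not, in general, force one of them to equal $S$, so a naive Baire-type argument in $S(\C)$ is insufficient. I would handle this by induction on $n:=\dim S=\trd_{\Q^{\alg}}K$. The base case $n=0$ reduces $K$ to a number field and follows from Proposition~\ref{prop:Green_harmonic} directly. For $n\geq 1$, one restricts the data $(f,C,\mathcal P,p_i)$ to a sufficiently generic $\Q^{\alg}$-rational hypersurface $T\subset S$ intersecting $U$, ensuring via the collision-control and Shub--Sullivan ideas behind Proposition~\ref{prop:keeping} that $\mathcal P|_T$ remains infinite and the sections $p_i|_T$ stay distinct; the inductive hypothesis applied over $T$ then forces $T\subset E^{(i)}_{k(T),\ell(T)}$ for some pair depending on $T$. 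Varying $T$ through an infinite family of admissible hypersurfaces and using that any proper closed subscheme $E^{(i)}_{k,\ell}\subsetneq S$ contains only finitely many distinct irreducible hypersurface components, a pigeonhole argument on the pairs $(k(T),\ell(T))$ should force some $E^{(i)}_{k_0,\ell_0}$ to equal $S$, completing the proof.
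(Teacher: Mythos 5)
Your setup and the first two steps (specialize over $S = \spec R$, apply Proposition~\ref{prop:keeping} to get a Zariski-dense set of $\Q^{\alg}$-parameters at which the specialized preperiodic set remains infinite, and then invoke Proposition~\ref{prop:Green_harmonic} fiberwise) agree with the paper. You also correctly identify the real obstacle: that $U\cap S(\Q^{\alg})$ being covered by the countable family $\{E^{(i)}_{k,\ell}\}$ does not by itself force any single $E^{(i)}_{k,\ell}$ to be all of $S$.

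However, your proposed resolution has a genuine gap. The pigeonhole step at the end requires the pairs $(k(T),\ell(T))$ to range over a \emph{finite} set (or at least for one pair to recur infinitely often) as $T$ varies over infinitely many hypersurfaces, and you give no argument for such a bound. In fact, the unboundedness of preperiods and periods across the family is precisely the source of the difficulty; without a uniform bound on $k(T)+\ell(T)$, the $E^{(i)}_{k(T),\ell(T)}$ could all be distinct proper closed subschemes, and the pigeonhole fails. So the inductive scheme, while plausible-sounding, does not close.

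The paper takes a quite different route to lift preperiodicity to the generic fiber. After fixing a reference parameter $s_0\in U\cap S(\Q^{\alg})$ and replacing $f$ by an iterate so that $c_{s_0}$ is a \emph{fixed} point $q_{s_0}$ of $(f_{s_0})_\infty$, it spreads $q$ out to a fixed section $q\in\P^2(R)$ and reduces to showing $c=q$ over $S$. The multiplier $\mu:=\hat f'_{s_0}(q_{s_0})$ lies in $\Q^{\alg}$, and by Kronecker's theorem either $\mu$ is a root of unity or $|\mu|_v<1$ at some place $v$ of $\Q^{\alg}$. In the second case one works in the Berkovich analytification over $\C_v$: in a uniform attracting basin of $q_s$, a preperiodic point converging to $q_s$ must eventually \emph{hit} $q_s$, and a local-degree argument (as in Proposition~\ref{prop:keeping}) forces $c_s=q_s$ on an open set, hence $c=q$ by analytic continuation. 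In the root-of-unity case one works over $\C$ and invokes Chio--Roeder (every parameter where the marked point is preperiodic but not persistently so lies in the bifurcation locus), then Gauthier's theorem on pairs (if the family on $L_\infty$ is non-trivial, full bifurcation locus forces $J=\P^1$, contradicting the parabolic point), and finally an attracting-petal argument for the trivial family. None of this requires a uniform bound on preperiods, which is exactly what your pigeonhole implicitly assumes.
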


\begin{proof}
We keep the same  formalism and notation 
 as above, so that $f$ is viewed as a family over $S$. Write 
 $\overline C\cap L_\infty = \set{c_1,\ldots, c_r}$  and without loss of generality enlarge $R$ so that 
 the points at infinity $c_i$ have their coordinates in $R$. Fix  $i\in \set{1, \ldots, r}$ for the remainder of the proof
  and consider $c = c_i$.
 By Proposition~\ref{prop:keeping}, there is a Zariski open subset $U$ such that for any $s\in U\cap S(\Q^{\rm alg})$, $f_s$ admits infinitely many preperiodic points on $C_s$. Therefore, 
 by Proposition~\ref{prop:Green_harmonic}, for every such $s$, $c_{s}$ is preperiodic.  Fix $s_0 \in U\cap S(\Q^{\rm alg})$, then 
 $c_{s_0}$ eventually falls on a periodic point $q_{s_0}$. Replacing $f$ by $f^N$ and $C$ by $f^N(C)$ for some $N$, we may 
 assume that $q_{s_0}$ is fixed and $c_{s_0}   = q_{s_0}$. 
 Enlarging $R$ again if necessary we way assume that $q_{s_0}$ is the specialization at $s=s_0$ of a fixed point $q\in \P^2(R)$ of $f$.

Our purpose is to show that $c=q$. To simplify notation we write $\hat{f}= f|_{L_\infty}$. 
Note that the multiplier $\mu:= \hat{f}'_{s_0}(q_{s_0})$ belongs to $\Q^{\alg}$. 
It follows from Kronecker's theorem that either $\mu$ is a root of unity 
or there is a place $v$ on $\Q^{\alg}$ such that $|\mu|_v<1$.

\subsubsection*{Case 1}
$\mu$ is not a root of unity. 

Fix a place $v$ on $\Q^{\alg}$ such that  $|\mu|_v<1$, and consider the completion $\C_v$
of $(\Q^{\alg},|\cdot|_v)$. We then argue in the analytic topology
in the Berkovich analytification of $\P^2_{\C_v}$ and $S_{\C_v}$. 

Fix  a neighborhood $W$ of $s_0$ in $S^{\an}_{\C_v}$ such that for $s\in W$,  $q_s$ is attracting, 
and  a neighborhood   
$V$ of $q_s$ in $L_\infty$ independent of $s\in W$ such that $\hat{f}_s(V)\subset V$ and 
for any $z\in V$, $f^n_s(z)$ converges to $q_s$ as $n\to\infty$. Reducing $W$ if necessary we may assume that for any 
$s\in W$, $c_s$ belongs to $V$. For $s\in W\cap S(\Q^{\rm alg})$, $c_s$ is preperiodic and converges to $q_s$, 
so it is preperiodic to $q_s$, that is, there exists a minimal  $k=k(s)$ such that $f_s^{k(s)}(c_s) = q_s$.  Now we use an argument similar to that of Proposition~\ref{prop:keeping}: let $H\subset S$ be a hypersurface such that the local degree of $f_s$ at $q_s$ is locally minimal outside $H$ and fix $s_1 \in W\setminus H$. Then, there is a neighborhood $W_1$ of $s_1$ in $W\setminus H$
and a neighborhood $V_1\subset V$ of $q_1$ such that for any $s\in W_1$, 
$f_{s}(V_1)\subset V_1$ and $f_s\inv(q_s)\cap V_1 = \set{q_s}$. From this it follows that the only point eventually falling onto 
$q_s$ in $V_1$ is $q_s$ itself. Therefore if  $s\in W_1\cap S(\Q^{\rm alg})$ is so close to $s_1$ that 
$c_s \in V_1$, we infer that  $c_s = q_s$, and finally  $c=q$ by analytic continuation. 

\subsubsection*{Case 2}
$\mu$ is   a root of unity. 

To deal with this case we embed $\Q^{\alg}$ into $\C$, and work at the complex place. Recall that a holomorphic family $(g_\lambda)_{\la\in \Lambda}$ of rational maps on $\P^1(\C)$, parameterized by a connected complex manifold is trivial if any two members are conjugate by a Möbius transformation, depending holomorphically on $\Lambda$. If $c$ is persistently preperiodic we are done, so assume that 
$c$ is not persistently preperiodic.  

Under our assumptions,   
there is a dense set $S(\Q^{\rm alg})$ of parameters such that $c_s$ is preperiodic, but $c$ 
is not persistently preperiodic.  Thus by Chio-Roeder~\cite[Theorem 2.7]{chio-roeder} (see also~\cite[Theorem 4]{preper})
every such parameter belongs to the 
 bifurcation locus of the marked  family $(f_\infty, c)$ (note that $c$ is \emph{not} a critical point here). 
 As a consequence,  the bifurcation locus  of the   family is equal to $S^{\an}_\C$. 
 (Note that it is enough to work in a neighborhood 
 of $s_0$, away from possible singularities of $S^{\an}_\C$.)
 
 A first possibility is that  the family $(\hat{f}_s)_{s\in S^{\an}_\C}$ is non-trivial. 
Then Gauthier~\cite[Theorem A]{gauthier:pairs}
implies that 
 $J(\hat{f}_s)  = L_\infty$ for all $s$. But  since  $\hat{f}_{s_0}$ has a rationally indifferent fixed point,   it admits an attracting petal and $J(\hat{f}_{s_0})\neq L_\infty$. This contradiction shows that  the family 
$(\hat{f}_s)_{s\in S^{\an}_\C}$ is trivial. 

Now the situation is that there is a holomorphic family $\varphi_s$ of Möbius transformations 
such that   $ \varphi_s  \hat{f}_s\varphi_s \inv= g$ is  a fixed rational map $g$ on $\P^1$ with a rationally 
 indifferent fixed point at 0. After this conjugacy, the marked family $(\hat{f}, c)$ becomes 
 $(g, \varphi(c))$. Since $c$ coincides with $q$ at $s_0$ and 
  by assumption $c$ is not persistently preperiodic, there is an open set $\Omega$ 
  of parameters such that for $s\in \Omega$, $\varphi_s(c_s)$ belongs to some  attracting petal associated to 0 for $g$. This contradicts the fact that $\varphi_s(c_s)$ must be preperiodic for a dense set of parameters, and the proof is complete. 
\end{proof}

\begin{proof}[Conclusion of the proof of Theorem~\ref{thm:DMM}]
By Proposition~\ref{prop:infinity_periodic}, any point at infinity of $C$ is preperiodic, 
and, by assumption, one of these points, say $c$, is preperiodic to a non-superattracting periodic point $p$. 
Replace $f$ by $f^N$ and $C$ by $f^N(C)$ for some $N$, so that $c=p$ is fixed. By Proposition~\ref{prop:keeping}, there is a 
non-trivial Zariski open subset $U\subset S$ such that for every $s\in U\cap S(\Q^{\rm alg})$, 
$C_s$ contains infinitely many preperiodic points. Then, since $p_s$ is fixed and not superattracting for $f_s$, 
Theorem~\ref{thm:main_precised} asserts that $C_s$ is preperiodic, and more precisely fixed,
under $f_s$ (see Remark~\ref{rmk:fixed}).  The density  of   $U\cap S(\Q^{\rm alg})$ in $S$ 
(for the Zariski or analytic topology) then implies that $f(C) = C$, and the proof is complete. 
\end{proof}

\bibliographystyle{plain}
\bibliography{biblio-DMM}

\end{document}